\font\bbbld=msbm10 scaled\magstephalf
\newcommand{\bi}{\bar{i}}
\newcommand{\bj}{\bar{j}}
\newcommand{\bk}{\bar{k}}
\newcommand{\bl}{\bar{l}}
\newcommand{\bm}{\bar{m}}
\newcommand{\bn}{\bar{n}}
\newcommand{\bq}{\bar{q}}
\newcommand{\bt}{\bar{t}}
\newcommand{\balpha}{\bar{\alpha}}
\newcommand{\bzeta}{\bar{\zeta}}
\newcommand{\bpartial}{\bar{\partial}}
\newcommand{\fg}{\mathfrak{g}}
\newcommand{\fI}{\mathfrak{I}}
\newcommand{\fRe}{\mathfrak{Re}}
\newcommand{\bfN}{\hbox{\bbbld N}}
\newcommand{\bfR}{\hbox{\bbbld R}}
\newcommand{\cC}{\mathcal{C}}
\newcommand{\tr}{\mbox{tr}}
\newcommand{\ol}{\overline}
\newtheorem{theorem}{Theorem}[section]
\newtheorem{lemma}[theorem]{Lemma}
\newtheorem{proposition}[theorem]{Proposition}
\newtheorem{corollary}[theorem]{Corollary}
 \theoremstyle{definition}
\newtheorem{definition}[theorem]{Definition}
\theoremstyle{remark}
\newtheorem{remark}[theorem]{Remark}
\numberwithin{equation}{section}
\begin{document}

\title[fully nonlinear Parabolic equations]
{Convergence of a class of Fully non-linear Parabolic equations on Hermitian manifolds}
%on Hermitian manifolds}
\author{Mathew George}
\address{Department of Mathematics, Ohio State University,
         Columbus, OH 43210, USA}
\email{george.924@buckeyemail.osu.edu}
%\author{Bo Guan}
%\address{Department of Mathematics, Ohio State University,
%         Columbus, OH 43210, USA}
%
%\email{guan@math.ohio-state.edu}

\date{}

\begin{abstract}
We consider a class of fully non-linear parabolic equations on compact Hermitian manifolds involving symmetric functions of partial Laplacians. Under fairly general assumptions, we show the long time existence and convergence of solutions. We also derive a Harnack inequality for the linearized equation which is used in the proof of convergence.

{\em Mathematical Subject Classification (MSC2020):}
35K10, 35K55, 58J35.
%{\em Keywords:} Fully nonlinear elliptic equation, %Hermitian manifolds,
%{\em a priori} estimates, Dirichlet problem, subsolutions,  %concavity and convexity properties.
\end{abstract}

\maketitle

\section{Introduction}
\label{gg-I}
\setcounter{equation}{0}
\medskip

In this paper we study the following parabolic equation on an $n$-dimensional compact Hermitian manifold $(M,\omega)$.

\begin{equation}
\label{I1}
\begin{aligned}
    &\frac{\partial \phi}{\partial t}= f(\Lambda(\sqrt{-1}\partial\bpartial \phi +X[\phi]))-\psi[\phi]\\
    &\phi(x,0)=\phi_0\in C^{\infty}(M)
    \end{aligned}
\end{equation}

\noindent where $f(\Lambda)$ is a symmetric function of $\Lambda_i$ which denotes a partial sum of eigenvalues of $\sqrt{-1}\partial\bpartial \phi +X[\phi]$. More precisely, let  $K \leq n$ be a fixed positive integer. Set 
\[ \fI_K = \{(i_1, \ldots, i_K): 1 \leq i_1 < \cdots < i_K \leq n, \; i_j \in \bfN\} \]

\noindent Denote the elements of $\fI_K$ by $\{I_1,\hdots,I_N\}$ after fixing an order. Then

\[ \Lambda (\lambda) = (\Lambda_{1} (\lambda), \ldots, \Lambda_{N} (\lambda)):=(\Lambda_{I_1} (\lambda), \ldots, \Lambda_{I_N} (\lambda)) \]

\noindent where

\[ \Lambda_I (\lambda) 
    = \sum_{i \in I} \lambda_i = \lambda_{i_1} + \cdots + \lambda_{i_K}, 
    \;\; \lambda =  (\lambda_{1},  \ldots, \lambda_{n}) \in \bfR^n. \]

\noindent We also write
\[ \Lambda (\sqrt{-1} \partial \bpartial \phi + X [\phi]) := \Lambda (\lambda(\sqrt{-1} \partial \bpartial \phi + X [\phi])) \]
\noindent where $ \lambda(\sqrt{-1} \partial \bpartial \phi + X [\phi]) = (\lambda_{1},  \ldots, \lambda_{n})$ denotes the eigenvalues of $\sqrt{-1} \partial \bpartial \phi + X [\phi]$ with respect to 
$\omega$.

This form of $\Lambda$ was introduced in \cite{GGQ21} as a generalization of $\lambda(\Delta u\omega -\sqrt{-1}\partial\bpartial u+X[u])$ which is obtained in the case when $K=n-1$. The $(1,1)$-form $X[\phi]=X(z, \phi, \partial \phi, \bpartial \phi)$ and the function $\psi[\phi]=\psi(z, \phi, \partial \phi, \bpartial \phi)$ depend on $\phi$ and its first order derivatives as well. Throughout this article $f$, $X$ and $\psi$ are assumed to be smooth.

Equation \eqref{I1} is the parabolic counterpart of an elliptic equation studied by the author with Guan and Qiu in \cite{GGQ21} on Hermitian manifolds. Such equations are of interest, for example in the proof of Gauduchon conjecture by  Sz\'ekelyhidi-Tosatti-Weinkove \cite{STW17} and in the work of Guan-Qiu-Yuan \cite{GQY19} involving the study of conformal deformations of mixed Chern-Ricci forms. 

\

To state the main theorem we make the following set of assumptions on $f$, $X$ and $\psi$.

\

\noindent \textbf{Assumptions on $f$:} $f$ is a symmetric function of $N$ variables defined in a symmetric open convex cone $\Gamma\subset \mathbb{R}^N$ with vertex at the origin with

\begin{equation}\label{P1}
    \Gamma_N=\{\Lambda\in \mathbb{R}^N:\Lambda_i>0\}\subset\Gamma,
\end{equation}

\noindent and satisfies the conditions

\begin{equation}\label{P2}
    f_i\equiv \frac{\partial f}{\partial \Lambda_i}\geq 0 \text{ in $\Gamma$, $1\leq i\leq N$,}
\end{equation}

\begin{equation}\label{P3}
    \text{$f$ is a concave function in $\Gamma$},
\end{equation}

\begin{equation}\label{P4}
    \sup\limits_{\partial\Gamma}f<\inf\limits_M\psi
\end{equation}

\noindent and,

\begin{equation}\label{P5}
    \lim\limits_{t\to \infty}f(t\Lambda)=\sup\limits_{\Gamma}f, \;\; \forall \;\Lambda\in \Gamma
\end{equation}

We will say that $\phi$ is an \textit{admissible} function if $\Lambda (\sqrt{-1} \partial \bpartial \phi + X [\phi])\in \Gamma$, for all $t$.

\begin{remark}
With strict inequality in \eqref{P2}, conditions \eqref{P1} to \eqref{P5} are the structure conditions of Caffarelli-Nirenberg-Spruck \cite{CNS85}.
\end{remark}

For deriving first and second order estimates, we will make the following additional assumptions on $f$.
\begin{equation}\label{S0.1}
    \sum f_i\Lambda_i\geq -C_0\sum f_i \text{ in $\Gamma$}
\end{equation}

\noindent for some constant $C_0>0$.

\begin{equation}\label{S0.2}
    \text{rank of $\mathcal{C}_{\sigma}^+\geq \frac{N(n-K)}{n}+1, \; \forall \; \inf\limits_{\Gamma}f\leq \sigma \leq \sup\limits_{\partial\Gamma}f$ }
\end{equation}

\noindent and,

\begin{equation}\label{S0.3}
    \lim\limits_{t\to+\infty}f(t\mathbf{1})-\sup\limits_M\psi[\phi]\geq c_0>0.
\end{equation}

It is worth emphasizing that assumption \eqref{S0.2} is the critical ingredient used in the derivative estimations. Condition \eqref{S0.1} means that at any point $\Lambda_0\in \Gamma$ , the distance from the origin to the tangent plane at $\Lambda_0$ of the level hypersuface $\partial \Gamma^{f(\Lambda_0)}$ has a uniform
bound $C_0$. It is satisfied in most applications and is weaker than assumption \eqref{P5} which implies 

$$\sum f_i\Lambda_i \geq 0 \text{ in } \Gamma$$

\

\noindent \textbf{Assumptions on $\psi$ and $X$ :} Some growth assumptions must be made on $\psi$ and $X$. Here we impose the following conditions. 

\begin{equation}\label{U0.1}
    G^{i\bj}X_{i\bar j,\phi}-\psi_{\phi}\leq 0
\end{equation}
\noindent where $G^{i\bj}$ are the coefficients of second-order terms in the linearized equation (see section \ref{E}). This is only used for estimating $\sup{|\phi_t|}$ by the maximum principle. In section \ref{C}, we will assume that $X$ and $\psi$ are independent of $\phi$ for proving the convergence in Theorem \ref{theorem-I1}.
 
 \

 For deriving gradient estimates, the following conditions are assumed.
 
 \begin{equation}\label{G0.1}
    |D_{\zeta}X(z,\phi,\zeta,\overline{\zeta})|\leq \varrho_0|\zeta|,\; D_\phi X\leq (\varrho_0|\zeta|^2+\varrho_1)\omega
\end{equation}

\noindent where $\varrho_1=\varrho_1(z,\phi)$ and $\varrho_0=\varrho_0(z,\phi,|\zeta|)\to 0^+$ as $|\zeta|\to\infty$; we may assume $t\varrho_0(z,\phi,t)$ to be increasing in $t>0$. It follows that $|X|\leq C\varrho_0|\zeta|^2+\varrho_1(z,\phi)$, for some function $\varrho_1$; we shall only need 

\begin{equation}\label{G0.2}
    X\leq (\varrho_0|\zeta|^2+\varrho_1)\omega
\end{equation}

On $\psi$ we impose similar constraints, but also depending on the growth of $f$.

\begin{equation}\label{G0.3}
    |D_{\zeta}\psi(z,\phi,\zeta,\overline{\zeta})|\leq \varrho_0f(|\zeta|^2\mathbf{1})/|\zeta|,\; -D_{\phi}\psi\leq \varrho_0f(|\zeta|^2\mathbf{1})+\varrho_1(z,\phi)
\end{equation}

\noindent which implies

\begin{equation}\label{G0.4}
    \psi\leq \varrho_0f(|\zeta|^2\mathbf{1})+\varrho_1(z,\phi)
\end{equation}

We will also assume that 

\begin{equation}\label{G0.5}
    |\nabla_z X |\leq |\zeta| \left(\varrho_0f(|\zeta|^2\mathbf{1})+\varrho_1(z,\phi)\right) 
\end{equation}

and,

\begin{equation}\label{G0.6}
    |\nabla_z \psi|\leq  |\zeta|\left(\varrho_0|\zeta|^2+\varrho_1(z,\phi)\right)
\end{equation}

We will assume for convenience that $\int_M\omega^n=1$. Then the main result is stated as follows. 

\begin{theorem}\label{theorem-I1}
Let $f$, $X$ and $\psi$ satisfy (\ref{P1})-(\ref{P5}), (\ref{S0.2})-(\ref{G0.1}), (\ref{G0.3}), (\ref{G0.5}) and (\ref{G0.6}). Then equation (\ref{I1}) has an admissible solution $\phi$ for all time $t\in [0,\infty)$. In addition, the normalized function of $\phi$ defined by

\begin{equation}\label{I2}
    \bar{\phi}:=\phi-\int_M\phi\omega^n
\end{equation}

\noindent has the following uniform estimate

$$|\bar{\phi}|_{C_{x,t}^{\infty,1}}\leq C$$ 
for a constant $C$ that depends only on $(M,\omega)$ and $\phi_0$. If $X$ and $\psi$ are independent of $\phi$, then $\bar{\phi}$ converges in $C^{\infty}$ to a smooth function $\bar{\phi}_{\infty}$ as $t\to \infty$, where $\bar{\phi}_{\infty}$ is a solution of the elliptic equation
    \begin{equation}\label{I4}
        f(\Lambda(\sqrt{-1}\partial\bpartial u +X[u]))=\psi[u]+a
    \end{equation}
            \noindent for some constant $a$.

\end{theorem}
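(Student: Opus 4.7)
My plan is first to derive a priori estimates uniform in $t$ for admissible solutions of \eqref{I1}, which upgrades short-time existence to long-time existence, and then to invoke the Harnack inequality for the linearized equation (advertised in the abstract) to obtain exponential convergence of $\bar\phi$ as $t\to\infty$.

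Short-time existence of an admissible solution follows from standard parabolic theory, since \eqref{P2} and the admissibility of $\phi_0$ make the linearization of \eqref{I1} uniformly parabolic near $t=0$. Let $[0,T^*)$ be the maximal existence interval. Differentiating \eqref{I1} in $t$ yields a linear parabolic equation for $\phi_t$; assumption \eqref{U0.1} is precisely what allows the maximum principle to give $\sup_{M\times[0,T^*)}|\phi_t|\leq C$. With $\phi_t$ uniformly bounded, \eqref{I1} is an elliptic equation of the type studied in \cite{GGQ21} with bounded right-hand side at each fixed time, so the $L^\infty$ oscillation of $\bar\phi$ can be controlled by adapting the elliptic $C^0$ estimate from \cite{GGQ21} (of Alexandrov--Bakelman--Pucci / complex Monge--Amp\`ere type).

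The spatial derivative estimates are the heart of the matter. I would obtain $|\nabla \phi|\leq C$ by the standard auxiliary function $\log|\nabla\phi|^2 + h(\bar\phi)$, with the growth hypotheses \eqref{G0.1}, \eqref{G0.3}, \eqref{G0.5}, \eqref{G0.6} tailored to absorb the bad error terms generated by $X$ and $\psi$. The real Hessian estimate $|\partial\bpartial\phi|\leq C(1+|\nabla\phi|^2)$ is the main technical obstacle: because $\Lambda$ is a vector of partial sums of eigenvalues, one cannot simply apply the maximum principle to the largest eigenvalue of $\sqrt{-1}\partial\bpartial\phi+X[\phi]$, and one has to exploit the rank condition \eqref{S0.2} on $\mathcal{C}_\sigma^+$ as the critical ingredient that closes the argument (the parabolic term $\phi_t$ contributes with the favorable sign at an interior space-time maximum). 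Once uniform parabolicity is established, concavity \eqref{P3} triggers Evans--Krylov to yield $C^{2,\alpha}$ estimates, and the parabolic Schauder bootstrap produces the uniform $|\bar\phi|_{C^{\infty,1}_{x,t}}\leq C$ bound. Since these bounds are independent of $T^*$, continuation forces $T^*=\infty$.

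For the convergence, assume $X$ and $\psi$ are independent of $\phi$. Differentiating \eqref{I1} in $t$ shows that $u:=\phi_t$ solves a linear uniformly parabolic equation $\partial_t u = G^{i\bar j}u_{i\bar j}+(\text{first order terms})$ whose coefficients are smooth and bounded in view of the previous estimates. The Harnack inequality for this equation (proved elsewhere in the paper) yields exponential decay of the oscillation, $\max_M u(\cdot,t)-\min_M u(\cdot,t)\leq C e^{-\delta t}$, so $\phi_t(\cdot,t)\to a$ uniformly at an exponential rate for some constant $a$. Hence $\bar\phi_t\to 0$ exponentially, $\bar\phi(\cdot,t)$ is Cauchy in $C^0$, and interpolation with the uniform $C^\infty$ bound upgrades this to $C^\infty$ Cauchy convergence to a smooth limit $\bar\phi_\infty$. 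Passing to the limit in \eqref{I1} then produces the elliptic equation \eqref{I4} with precisely this constant $a$.
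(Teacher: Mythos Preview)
Your overall architecture---$\phi_t$ bound, then spatial $C^2$ estimates, then Evans--Krylov and bootstrap for long-time existence, then Harnack for the linearization to force exponential decay of $\mathrm{osc}\,\phi_t$ and hence $C^\infty$ convergence of $\bar\phi$---matches the paper. The convergence endgame you sketch (Cauchy in $C^0$ plus interpolation) is a harmless variant of what the paper does (monotone plus compactness).

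The one place where your logic diverges from the paper, and where I would push back, is the $C^0$ estimate. You propose to bound $\mathrm{osc}\,\bar\phi$ \emph{first}, by an ABP / complex Monge--Amp\`ere type argument borrowed from \cite{GGQ21}, and only then run the gradient estimate with an auxiliary function $h(\bar\phi)$. The paper does it the other way around: the gradient estimate is proved with the barrier $\eta=-\log(1+\sup\phi-\phi)$, which requires no a priori $C^0$ information since $1+\sup\phi-\phi\geq 1$, and yields the sharpened form
\[
|\nabla\phi|^2\leq C\,(1+\sup\phi-\phi).
\]
This inequality \emph{itself} produces the oscillation bound (apply the mean value theorem to $(1+\sup\phi-\phi)^{1/2}$ along a geodesic), so the $C^0$ estimate is a corollary of the gradient estimate rather than a prerequisite. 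Your ordering would force you to produce an independent ABP-type $C^0$ bound for a fully nonlinear equation on a Hermitian manifold with $X$ and $\psi$ depending on $\nabla\phi$; that is not standard and is not what \cite{GGQ21} supplies. I would realign this step to the paper's order.
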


\
\begin{remark}
If $\phi$ is a solution of (\ref{I1}), then $\bar{\phi}$ solves
\begin{equation}\label{I3}
\begin{aligned}
    &\frac{\partial \bar{\phi}}{\partial t}= f(\Lambda(\sqrt{-1}\partial\bpartial {\bar{\phi}} +X[{\phi}]))-\psi[{\phi}]-\int_M\frac{\partial\phi}{\partial t}\omega^n\\
    &\bar{\phi}(x,0)=\phi_0-\int_M\phi_0\omega^n
\end{aligned}
\end{equation}
\end{remark}

 \begin{remark}
 It would be interesting to investigate whether the convergence in Theorem (\ref{theorem-I1}) holds with $X$ and $\psi$ depending on $\phi$.
 \end{remark}

The organization of the paper is as follows. To prove the long-time existence of solutions, we will derive apriori estimates for the $C^{2,\alpha}(M)$ norm of $\bar{\phi}$. The second-order and gradient estimates for $\bar{\phi}$ will be derived in sections \ref{S} and \ref{G} respectively. This will be done by applying maximum principle to test functions similar to the elliptic case. These estimates will depend on $\sup{|\phi_t|}$ which is easily bounded by applying maximum principle to the linearized equation. This is done in section \ref{E}.

Gradient estimates in section \ref{G} will in turn provide a uniform estimate for $\bar{\phi}$. All of this combined with Evans-Krylov theorem and a standard bootstrapping argument will imply the apriori estimate for $\bar{\phi}$ in Theorem \ref{theorem-I1}. Now the solution can be extended to $T=\infty$, which will be shown in section \ref{L}.

For proving the convergence, we will derive a Harnack inequality for positive solutions of equations of the form,

\begin{equation}
    \frac{\partial u}{\partial t}=  G^{i\bar j}\partial_i\partial_{\bar j}u+\chi_ku_k+\chi_{\bar{k}}u_{\bar k}+\chi_0 u
\end{equation}

This is similar to the results of Li-Yau \cite{LY86} and Gill \cite{Gill11}, but on Hermitian manifolds and with lower order terms. Li and Yau considered parabolic equations associated to the Schr\"odinger operator given by

\begin{equation}\label{li-yau}
u_t=\Delta u -q(x,t)u  
\end{equation}

\noindent on a Riemannian manifold $M$ with $q \in C_{x,t}^{2,1}(M,[0,T))$. On the other hand, Gill derived Harnack inequality for

\begin{equation}\label{gill}
   u_t=g^{i\bj}u_{i\bj} 
\end{equation}

\noindent on compact Hermitian manifolds, for proving the convergence of Chern-Ricci flow in the case when $c^{BM}_1(M)=0$. This is also related to the work of H.D. Cao \cite{Cao1985} who considered the K\"ahler version of the same equation. Our result uses similar techniques as the above equations, but now the estimation is more complicated because of the additional terms involved.

The Harnack inequality can further be used to derive an exponential decay for the oscillation $\omega(t)$ of $\phi_t$. From there, the convergence follows by standard arguments as detailed in section 8.

\section{Preliminaries}

\label{P}
\setcounter{equation}{0}
\medskip

Now we shall introduce the basic notations and state some key lemmas that will be used in the subsequent sections. Let $C^{k,p}_{x,t}(M\times I)$ be the set of functions defined on $M\times I$ whose derivatives of orders up to $(k,p)$ in $(x,t)$ variables exist and are continuous.

\medskip

We recall some notions from \cite{GGQ21}, \cite{G14} and \cite{GN21}. For a fixed real number $\sigma \in (\sup_{\partial \Gamma} f, \sup_{\Gamma} f)$
define
\[ \Gamma^{\sigma} = \{\lambda \in \Gamma: f (\lambda) > \sigma\}. \]

\begin{lemma}[\cite{GGQ21}]\label{lemma1}
 Under conditions \eqref{P2} and \eqref{P3}, the level hypersurface of $f$
 \[ \partial \Gamma^{\sigma} = \{\lambda \in \Gamma: f (\lambda) = \sigma\}, \]
which is the boundary of $\Gamma^{\sigma}$, is smooth and convex. 
\end{lemma}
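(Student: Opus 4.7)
My plan is to establish convexity of $\Gamma^{\sigma}$ as a set first, and then address the smoothness of its boundary as a hypersurface; the convexity of $\partial\Gamma^{\sigma}$ will then follow automatically from the fact that it is the relative boundary of a convex open region.

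For the convexity of $\Gamma^{\sigma}$: since $\Gamma$ is a convex open set and $f$ is concave on $\Gamma$ by assumption \eqref{P3}, the strict superlevel set $\{\lambda\in\Gamma:f(\lambda)>\sigma\}$ is convex. This is a standard elementary fact: for $\lambda,\mu\in\Gamma^{\sigma}$ and $t\in[0,1]$, concavity gives $f(t\lambda+(1-t)\mu)\geq tf(\lambda)+(1-t)f(\mu)>\sigma$. By continuity of $f$, the relative boundary of $\Gamma^{\sigma}$ inside $\Gamma$ is exactly the level set $\{\lambda\in\Gamma:f(\lambda)=\sigma\}$.

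The main step is to show that $\partial\Gamma^{\sigma}$ is a smooth hypersurface; by the implicit function theorem, it suffices to verify $\nabla f(\lambda_0)\neq 0$ at every $\lambda_0\in\partial\Gamma^{\sigma}$. The key observation is that $f$ is concave, so if $\nabla f(\lambda_0)=0$ then the supporting inequality $f(\lambda)\leq f(\lambda_0)+\nabla f(\lambda_0)\cdot(\lambda-\lambda_0)=f(\lambda_0)=\sigma$ holds for every $\lambda\in\Gamma$. But by hypothesis $\sigma<\sup_{\Gamma}f$, so some $\lambda\in\Gamma$ satisfies $f(\lambda)>\sigma$, a contradiction. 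Hence $\nabla f(\lambda_0)\neq 0$, and in fact by \eqref{P2} the components $f_i(\lambda_0)$ are nonnegative with at least one strictly positive, which also identifies the inward normal to $\partial\Gamma^{\sigma}$ at $\lambda_0$ as a multiple of $\nabla f(\lambda_0)$ with nonnegative entries.

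Finally, convexity of $\partial\Gamma^{\sigma}$ as a hypersurface is equivalent to the region $\Gamma^{\sigma}$ it bounds being a convex open set, which was established in the first paragraph; equivalently, the second fundamental form of $\partial\Gamma^{\sigma}$ with respect to the inward normal $\nabla f/|\nabla f|$ is nonnegative, which one checks directly from the concavity inequality $D^2 f\leq 0$ restricted to vectors tangent to the level set. I expect the only delicate point to be the nonvanishing of $\nabla f$ on $\partial\Gamma^{\sigma}$; once that is in hand, everything else is a clean consequence of concavity plus the implicit function theorem.
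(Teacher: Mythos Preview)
Your argument is correct, and in particular your concavity-based proof that $\nabla f(\lambda_0)\neq 0$ on $\partial\Gamma^{\sigma}$ is precisely what is needed to handle the case where \eqref{P2} is only the weak inequality $f_i\geq 0$; this is exactly the subtlety the paper flags in the sentence following the lemma. The paper itself does not supply a proof, citing \cite{GGQ21} and remarking that the conclusion ``is clearly true with strict inequality in \eqref{P2}, but still remains valid under the slightly weaker hypothesis''; your write-up fills in that remark.
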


This is clearly true with strict inequality in \eqref{P2}, but still remains valid under the slightly weaker hypothesis.

\

Define for $\lambda \in \partial \Gamma^{\sigma}$,

\[ \nu_{\lambda} = \frac{Df (\lambda)}{|Df (\lambda)|} \]

\

 $\nu_{\lambda}$ is the unit normal vector to $\partial \Gamma^{\sigma}$ at $\lambda$. The key ingredient used in finding apriori estimates later on is obtained by studying the \textit{tangent cone at infinity} to the level sets of $f$.

\begin{definition}[\cite{G14}] %[Guan, DMJ 2014]
For $\mu \in \bfR^n$
let %$\lambda \in \partial \Gamma^{\sigma}$ let
 \[ S^{\sigma}_{\mu} = \{\lambda \in \partial
\Gamma^{\sigma}: \nu_{\lambda} \cdot (\mu - \lambda) \leq 0\}. \]
%\;\; \mu \in \bfR^n. \]
The  {\em tangent cone at infinity} %$\cC^+_{\sigma}$
to $\Gamma^{\sigma}$ is defined as
\[ \begin{aligned}
\cC^+_{\sigma}
 \,& = \{\mu \in \bfR^n:
              S^{\sigma}_{\mu} \; \mbox{is compact}\}.
    \end{aligned} \]
\end{definition}

Clearly $\cC^+_{\sigma}$ is a symmetric convex cone. As in \cite{G14} one can show that $\cC^+_{\sigma}$ is open.

\begin{definition}[\cite{GN21}]
The {\em rank of ${\mathcal{C}}_{\sigma}^+$} is defined to be
\[ \min \{r (\nu): \mbox{$\nu$ is the unit normal vector of a supporting plane
to ${\mathcal{C}}_{\sigma}^+$}\} \]
where $r (\nu)$ denotes the number of non-zero
components of $\nu$.
%For convenience we define the rank of $ \bfR^n$ to be $n$.
\end{definition}

%For a unit vector $\nu \in \ol{\Gamma}_n$, 

Under the assumptions \eqref{P2}, \eqref{P3} and that $f$ satisfies,

\begin{equation}\label{P6}
    \sum f_i\Lambda_i\geq -C_0\sum f_i \text{  in $\Gamma$},
\end{equation}

\noindent we have the following results from \cite{GGQ21}.
\begin{lemma}
\label{lemma 2}
Let $P = \{\mu \in \bfR^N: \nu \cdot \mu = c\}$ be a hyperplane,  where $\nu$ is a unit vector. Suppose that 
there exists a sequence 
$\{\Lambda_k\}$ in $\partial \Gamma^{\sigma}$ with 
\begin{equation}
\label{P7}
\lim_{k \rightarrow + \infty} \nu_{\Lambda_k} = \nu, \;\;
\lim_{k \rightarrow + \infty} \nu_{\Lambda_k} \cdot \Lambda_k = c,  \;\;
\lim_{k \rightarrow + \infty} |\Lambda_k| = + \infty.  
\end{equation}
Then $P$ is a supporting hyperplane to $\mathcal{C}_{\sigma}^+$ at a non-vertex point.
\end{lemma}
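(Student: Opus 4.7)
The plan is to prove this in two stages: first that $P$ bounds the tangent cone $\mathcal{C}_\sigma^+$, and second that the contact with $\overline{\mathcal{C}_\sigma^+}$ occurs at a non-vertex point.

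For the bounding property, I would take any $\mu\in\mathcal{C}_\sigma^+$. By the definition of $\mathcal{C}_\sigma^+$, the set $S_\mu^\sigma$ is compact, so since $|\Lambda_k|\to\infty$, all but finitely many $\Lambda_k$ lie outside $S_\mu^\sigma$; equivalently $\nu_{\Lambda_k}\cdot(\mu-\Lambda_k)>0$ for large $k$. Passing to the limit in this inequality using \eqref{P7} yields $\nu\cdot\mu\geq c$, and continuity extends this to $\overline{\mathcal{C}_\sigma^+}$. Hence $P$ is a bounding hyperplane for the tangent cone on the side $\{\nu\cdot\mu\geq c\}$.

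For the non-vertex contact, I would extract a subsequence so that $\xi_k:=\Lambda_k/|\Lambda_k|$ converges to a unit vector $\xi$. Since $\nu_{\Lambda_k}\cdot\Lambda_k$ is bounded (above by concavity of $f$, below by \eqref{P6}), dividing by $|\Lambda_k|\to\infty$ forces $\nu\cdot\xi=\lim_k(\nu_{\Lambda_k}\cdot\Lambda_k)/|\Lambda_k|=0$. The main step is to show that $\xi$ lies in the recession cone of $\overline{\mathcal{C}_\sigma^+}$, i.e.\ for every $\mu_0\in\mathcal{C}_\sigma^+$ and $t\geq 0$ the point $\mu_0+t\xi$ belongs to $\overline{\mathcal{C}_\sigma^+}$. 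Granting this, the tangent hyperplanes $T_k:=\{\mu:\nu_{\Lambda_k}\cdot\mu=\nu_{\Lambda_k}\cdot\Lambda_k\}$ converge Hausdorff-locally to $P$, and the infimum $\inf_{\overline{\mathcal{C}_\sigma^+}}\nu\cdot\mu$ then equals $c$ and is attained at some $\mu_*\in\overline{\mathcal{C}_\sigma^+}\cap P$. The ray $\mu_*+t\xi$, $t\geq 0$, lies entirely inside $\overline{\mathcal{C}_\sigma^+}\cap P$, and any $t>0$ produces the desired non-vertex contact point.

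The main obstacle will be establishing the recession-direction property for $\xi$ simultaneously with the attainment of the infimum. The recession claim should follow from a diagonal argument: any hypothetical unbounded sequence $\{\lambda_j\}\subset S_{\mu_0+t\xi}^\sigma$ would, when compared against $\{\Lambda_k\}$ and using the asymptotic tangency encoded by $\nu\cdot\xi=0$, force an unbounded sequence inside $S_{\mu_0}^\sigma$, contradicting its compactness. The attainment relies on closedness of $\overline{\mathcal{C}_\sigma^+}$ and on the precise convergence $T_k\to P$, together with the fact that each $\Lambda_k\in\partial\Gamma^\sigma\subset\overline{\mathcal{C}_\sigma^+}$ sits on $T_k$. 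Both of these facts ultimately rely on the convexity of $\partial\Gamma^\sigma$ provided by Lemma~\ref{lemma1} and on the structural bound \eqref{P6}, which is what keeps $c$ finite and makes the limit hyperplane $P$ a meaningful object.
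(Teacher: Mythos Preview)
The paper does not actually prove this lemma: immediately after stating Lemmas~\ref{lemma 2} and~\ref{lemma 3} it says ``Proofs of these statements can be found in \cite{GGQ21}.'' So there is no in-paper argument to compare against, and I comment on your outline directly.

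Your bounding step is correct and is exactly the natural argument. The identification of $\xi=\lim\Lambda_k/|\Lambda_k|$ with $\nu\cdot\xi=0$ is also right. For the recession claim, however, the ``diagonal argument'' you sketch is both vague and unnecessary: since concavity gives $\Gamma^\sigma\subset\mathcal{C}_\sigma^+$, each $\Lambda_k\in\partial\Gamma^\sigma\subset\overline{\mathcal{C}_\sigma^+}$; for fixed $\mu_0\in\mathcal{C}_\sigma^+$ and $t>0$ the point $\mu_0+t(\Lambda_k-\mu_0)/|\Lambda_k-\mu_0|$ lies on the segment $[\mu_0,\Lambda_k]\subset\overline{\mathcal{C}_\sigma^+}$ by convexity, and sending $k\to\infty$ gives $\mu_0+t\xi\in\overline{\mathcal{C}_\sigma^+}$ directly.

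The genuine gap is the attainment step. You argue that $\inf_{\overline{\mathcal{C}_\sigma^+}}\nu\cdot\mu=c$ because ``$T_k\to P$'' and ``$\Lambda_k\in\overline{\mathcal{C}_\sigma^+}$ sits on $T_k$.'' But $|\Lambda_k|\to\infty$, so these points do not converge to anything in $P$, and local Hausdorff convergence of $T_k$ to $P$ says nothing about where $\overline{\mathcal{C}_\sigma^+}$ sits relative to $P$: you have only shown $\overline{\mathcal{C}_\sigma^+}\subset\{\nu\cdot\mu\ge c\}$, not that equality is achieved. Concretely, $\nu\cdot\Lambda_k-\nu_{\Lambda_k}\cdot\Lambda_k=(\nu-\nu_{\Lambda_k})\cdot\Lambda_k$ is a product of a term going to $0$ and one going to $\infty$, so you cannot conclude $\nu\cdot\Lambda_k\to c$. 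Since $\mathcal{C}_\sigma^+$ is asserted to be an open convex cone, any supporting hyperplane must pass through its vertex $v$; what is really needed is the identification $c=\nu\cdot v$, after which your ray $v+t\xi$ does give non-vertex contact points. That identification is the missing piece, and it requires more than what you have written; you would need to go back to how $\mathcal{C}_\sigma^+$ is assembled from the half-spaces $\{\nu_\lambda\cdot\mu\ge\nu_\lambda\cdot\lambda\}$ (or consult \cite{GGQ21}) to pin it down.
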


\begin{lemma}
\label{lemma 3} 
Suppose that the rank of $\mathcal{C}_{\sigma}^+$ is $r$. There exists $c_0 > 0$ 
such that at any point $\Lambda \in \partial \Gamma^{\sigma}$ where without loss of generality we assume $f_1 \leq \cdots \leq f_n$, 
\[ \sum_{i \leq N-r+1} f_i \geq c_0 \sum f_i. \]
\end{lemma}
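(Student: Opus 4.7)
The plan is to argue by contradiction, using Lemma 2 to produce a supporting hyperplane of $\mathcal{C}_{\sigma}^+$ whose unit normal has fewer than $r$ nonzero components.

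Suppose no such $c_0>0$ exists. Then for every $k$ there is $\Lambda^{(k)}\in\partial\Gamma^{\sigma}$ with, after relabelling so that $f_1(\Lambda^{(k)})\leq\cdots\leq f_N(\Lambda^{(k)})$,
$$\sum_{i\leq N-r+1}f_i(\Lambda^{(k)})<\frac{1}{k}\sum_{j=1}^{N} f_j(\Lambda^{(k)}).$$
Set $\nu^{(k)}=Df(\Lambda^{(k)})/|Df(\Lambda^{(k)})|$. Since $f_i\geq 0$ by \eqref{P2} and $\sum f_i\leq\sqrt{N}\,|Df|$, each of the first $N-r+1$ components of $\nu^{(k)}$ lies in $[0,\sqrt{N}/k]$. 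Passing to a subsequence, $\nu^{(k)}\to\nu$, a unit vector with at most $r-1$ nonzero entries, so $r(\nu)\leq r-1$.

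Next I would reduce to the case $|\Lambda^{(k)}|\to\infty$, so Lemma 2 becomes applicable. If $\Lambda^{(k)}$ remains bounded, pass to a limit $\Lambda_\infty\in\partial\Gamma^{\sigma}$; continuity of $Df$ together with the smoothness of $\partial\Gamma^{\sigma}$ from Lemma 1 (so $Df(\Lambda_\infty)\neq 0$) forces $f_i(\Lambda_\infty)=0$ for $i\leq N-r+1$. Choosing $i_0$ in this index range, concavity and $f_{i_0}(\Lambda_\infty)=0$ give $f(\Lambda_\infty+te_{i_0})\leq\sigma$ for all $t\geq 0$ with $\Lambda_\infty+te_{i_0}\in\Gamma$; invoking \eqref{P5} and monotonicity, one solves $f(\Lambda_\infty+te_{i_0}+s(t)\mathbf{1})=\sigma$ for a unique $s(t)\geq 0$, producing $\tilde\Lambda(t)\in\partial\Gamma^{\sigma}$ with $|\tilde\Lambda(t)|\to\infty$ as $t\to\infty$. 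A continuity/selection argument then yields a subsequence $\tilde\Lambda(t_k)$ along which the first $N-r+1$ components of the unit normal still tend to zero, so after replacement we may assume $|\Lambda^{(k)}|\to\infty$.

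Finally, I would control $\nu^{(k)}\cdot\Lambda^{(k)}$: condition \eqref{P6} yields $\nu^{(k)}\cdot\Lambda^{(k)}\geq -C_0\sum f_i/|Df|\geq -C_0\sqrt{N}$, while concavity at $\Lambda^{(k)}$ against any fixed $\tau\in\Gamma^{\sigma}$ gives
$$\nu^{(k)}\cdot\Lambda^{(k)}\leq\nu^{(k)}\cdot\tau+\frac{\sigma-f(\tau)}{|Df(\Lambda^{(k)})|}\leq|\tau|.$$
Extracting a further subsequence so $\nu^{(k)}\cdot\Lambda^{(k)}\to c\in\bfR$, Lemma 2 tells us that $\{\mu\in\bfR^N:\nu\cdot\mu=c\}$ is a supporting hyperplane of $\mathcal{C}_{\sigma}^+$ at a non-vertex point; but $r(\nu)\leq r-1$ contradicts the hypothesis that the rank of $\mathcal{C}_{\sigma}^+$ equals $r$. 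The main obstacle is the reduction step: ensuring that the newly constructed unbounded sequence in $\partial\Gamma^{\sigma}$ retains the degeneracy of the normal in its first $N-r+1$ components is the delicate technical point, since under the $\mathbf{1}$-translation used to return to $\partial\Gamma^{\sigma}$, one must verify that the selected normal components do not swell above $o(1)$ as $t\to\infty$.
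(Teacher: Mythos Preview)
The paper does not actually prove this lemma; it refers the reader to \cite{GGQ21}. So there is no in-paper argument to compare against, and I assess your proposal on its own merits.

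Your overall architecture---contradiction, extract $\Lambda^{(k)}\in\partial\Gamma^\sigma$ whose unit normals $\nu^{(k)}$ converge to a unit vector $\nu$ with at most $r-1$ nonzero entries, pass to $|\Lambda^{(k)}|\to\infty$, bound $\nu^{(k)}\cdot\Lambda^{(k)}$ via \eqref{P6} and concavity, and invoke Lemma~\ref{lemma 2}---is correct, and your two-sided bound on $\nu^{(k)}\cdot\Lambda^{(k)}$ is fine.

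The genuine gap is precisely where you flag it: the reduction from the bounded case. Two observations close it cleanly. First, no $\mathbf 1$-translation is needed. At $\Lambda_\infty$ you have $f_i(\Lambda_\infty)=0$ for every $i\le N-r+1$; set $v=\sum_{i\le N-r+1}e_i\in\overline{\Gamma_N}$, so $\Lambda_\infty+tv\in\Gamma$ for $t\ge 0$. Since each $f_i\ge 0$ by \eqref{P2}, $t\mapsto f(\Lambda_\infty+tv)$ is nondecreasing, while concavity together with $Df(\Lambda_\infty)\cdot v=0$ forces it to be $\le\sigma$. Hence $f(\Lambda_\infty+tv)\equiv\sigma$, i.e.\ $\tilde\Lambda(t):=\Lambda_\infty+tv\in\partial\Gamma^\sigma$ already, and $s(t)\equiv 0$. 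Second, the normal degeneracy persists automatically: differentiating $f(\tilde\Lambda(t))\equiv\sigma$ gives $\sum_{i\le N-r+1}f_i(\tilde\Lambda(t))=0$, and since every summand is nonnegative, $f_i(\tilde\Lambda(t))=0$ for each $i\le N-r+1$ and all $t\ge 0$. Thus $\nu_{\tilde\Lambda(t)}$ has its first $N-r+1$ components identically zero, $|\tilde\Lambda(t)|\to\infty$, and $\nu_{\tilde\Lambda(t)}\cdot\tilde\Lambda(t)=\nu_{\tilde\Lambda(t)}\cdot\Lambda_\infty$ is bounded; your Lemma~\ref{lemma 2} endgame now applies verbatim. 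The point you were missing is to move in the \emph{sum} of all degenerate coordinate directions rather than a single $e_{i_0}$: this is what makes the vanishing of all $N-r+1$ normal components propagate along the ray.
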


Proofs of these statements can be found in \cite{GGQ21}.

\begin{remark} An inequality of Lin-Trudinger~\cite{LT94} shows that
the rank of $\mathcal{C}_{\sigma}^+$ is $N-k+1$ for $f = \sigma_k^{\frac{1}{k}}$
and $\sigma > 0$, where
\[ \sigma_k (\Lambda) = \sum_{1 \leq i_1 < \cdots < i_k \leq N}
     \Lambda_{i_1} \cdots \Lambda_{i_k} \] %\;\; k = 1, \ldots, n.\]
%where $\sigma_k$
is the  $k$-th elementary symmetric function defined on the Garding cone
\[ \Gamma_k = \{\Lambda \in \bfR^N: \sigma_j (\Lambda) > 0, \;
\mbox{for $1 \leq j \leq k$}\}. \]

\end{remark}

\medskip

\section{Estimate for $\phi_t$}
\label{E}

To estimate $\sup{|\phi_t|}$, we will apply maximum principle to the linearization of \eqref{I1}.

\begin{proposition}\label{prop1}
Let $\phi\in C^{2,1}_{x,t}(M\times [0,T))$ be a solution of \eqref{I1}. Then under the assumption \eqref{U0.1},

$$\sup{|\phi_t|}\leq C$$
for some constant $C$ only depending on the initial data.
\end{proposition}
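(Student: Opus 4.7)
The plan is to obtain a linear parabolic equation for $h := \phi_t$ by differentiating \eqref{I1} in $t$, and then apply the parabolic maximum principle using hypothesis \eqref{U0.1}. Setting $A_{i\bar j} := \phi_{i\bar j} + X_{i\bar j}[\phi]$, $F := f(\Lambda(A))$ and $G^{i\bar j} := \partial F/\partial A_{i\bar j}$, so that \eqref{I1} reads $\phi_t = F - \psi$, I differentiate once in $t$ and apply the chain rule to the $\phi$-, $\partial\phi$- and $\bpartial\phi$-dependence of $X$ and $\psi$. Since $\partial_t A_{i\bar j} = h_{i\bar j} + X_{i\bar j,\phi}h + X_{i\bar j,\zeta_k}h_k + X_{i\bar j,\bar\zeta_k}h_{\bar k}$ and similarly for $\psi_t$, this yields
\begin{equation*}
h_t = G^{i\bar j} h_{i\bar j} + (\text{first-order terms in } h) + c\, h,
\end{equation*}
where the zeroth-order coefficient is exactly $c = G^{i\bar j} X_{i\bar j,\phi} - \psi_\phi$. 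By \eqref{P2} together with admissibility of $\phi$, the matrix $(G^{i\bar j})$ is positive semi-definite, while hypothesis \eqref{U0.1} states precisely that $c \leq 0$.

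The standard parabolic weak maximum principle now applies. At an interior spacetime maximum of $h$ one has $h_t \geq 0$, $\nabla h = 0$, and $G^{i\bar j} h_{i\bar j} \leq 0$, so the equation forces $c\, h \geq 0$ and hence $h \leq 0$; symmetrically $h \geq 0$ at any interior minimum. Consequently
\begin{equation*}
\sup_{M\times[0,T)} h \leq \max\bigl\{0,\ \sup_M h(\cdot,0)\bigr\}, \qquad \inf_{M\times[0,T)} h \geq \min\bigl\{0,\ \inf_M h(\cdot,0)\bigr\},
\end{equation*}
and since $h(\cdot,0) = f(\Lambda(\sqrt{-1}\partial\bpartial\phi_0 + X[\phi_0])) - \psi[\phi_0]$ depends only on the initial datum $\phi_0$, we conclude $\sup|\phi_t| \leq C$.

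The only mildly technical step is deriving the linearized equation itself, which is direct from the chain rule applied to $F$ and $\psi$; the sign condition \eqref{U0.1} is tailored exactly so that the zeroth-order coefficient on $h$ is nonpositive, and no further structure of $f$ beyond \eqref{P2} enters. I do not anticipate any serious obstacle here — this is the simplest of the estimates in the paper, which is precisely why it is placed before the derivative estimates that will rely on the bound for $|\phi_t|$.
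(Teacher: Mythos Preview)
Your proposal is correct and follows essentially the same approach as the paper: differentiate \eqref{I1} in $t$ to obtain a linear parabolic equation for $u=\phi_t$ with zeroth-order coefficient $\chi_0=G^{i\bar j}X_{i\bar j,\phi}-\psi_\phi\le 0$, then apply the weak parabolic maximum principle (the paper simply cites Lieberman, Theorem~7.1, rather than sketching the interior-maximum argument). One cosmetic remark: the intermediate clause ``hence $h\le 0$'' at an interior maximum is slightly loose when $c=0$ there, but your displayed conclusion $\sup h\le\max\{0,\sup_M h(\cdot,0)\}$ is exactly the correct statement of the weak maximum principle and is all that is needed.
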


\begin{proof}

 Denote
$$\fg[\phi]:=\sqrt{-1}\partial\bpartial \phi+ X[\phi],$$
and define $G$ by 
$$G(\fg[\phi])=f(\Lambda(\fg[\phi]))$$ Consequently, we can write \eqref{I1} as 
\label{U}
\setcounter{equation}{0}
\medskip
\begin{equation}
\label{U1}
    \frac{\partial \phi}{\partial t}= G(\fg)-\psi[\phi]
\end{equation}

Differentiate the equation wrt. $t$ and denote $\phi_t\equiv u$.
\begin{equation}
    \label{U2}
\begin{aligned}
    \frac{\partial u}{\partial t}&=G^{i\bar j}\partial_t \mathfrak g_{i\bar j}-\partial_t \psi[\phi]\\
    &= G^{i\bar j}\partial_i\partial_{\bar j}u+G^{i\bj}\partial_tX_{i\bar j}[\phi]-\partial_t\psi[\phi]
\end{aligned}
\end{equation}

\noindent where $G^{i\bj}=\dfrac{\partial G}{\partial \fg_{i\bj}}(\fg[\phi])$. Expanding the last two terms by chain rule gives,

\begin{equation}
\label{U3}
\begin{aligned}
    G^{i\bj}\partial_tX_{i\bar j}[\phi]-\partial_t\psi[\phi]&=G^{i\bj}X_{i\bar j,\phi}u+G^{i\bj}X_{i\bar j,\zeta_k}u_k+G^{i\bj}X_{i\bar j,\zeta_{\bar k}}u_{\bar k}-(\psi_{\phi}u+\psi_{\zeta_k}u_k+\psi_{\zeta_{\bar k}}u_{\bar k})\\
    &:= \chi_k[\phi]u_k+\chi_{\bar{k}}[\phi]u_{\bar k}+\chi_0[\phi]u
\end{aligned}
\end{equation}
 So we get the linearization of equation \eqref{I1} given by

\begin{equation}
\label{U4}
      \frac{\partial u}{\partial t}=  G^{i\bar j}\partial_i\partial_{\bar j}u+\chi_k[\phi]u_k+\chi_{\bar{k}}[\phi]u_{\bar k}+\chi_0[\phi]u
\end{equation}

Assuming $G^{i\bj}X_{i\bar j,\phi}-\psi_{\phi}\leq 0$, that is $\chi_0\leq 0$, allows us to apply the parabolic maximum principle (see \cite{Lieberman} Theorem 7.1) to show that $\sup{|u|}$ is uniformly bounded. Note that the infimum of $u$ is bounded by applying maximum principle to $-u$. 

\end{proof}

\section{Second Order Estimates}
\label{S}
\setcounter{equation}{0}
\medskip

We introduce some geometric preliminaries first. Throughout this article $\nabla$ will denote the Chern connection with respect to the metric $\omega$. This is the unique connection on $TM$ defined by 

$$d \hspace{1pt}\langle s_1,s_2\rangle=\langle\nabla s_1, s_2\rangle+\langle s_1,\nabla s_2\rangle \text{ and } \nabla^{0,1}=\bpartial$$

\noindent for any two smooth sections $s_1$ and $s_2$ of $TM$ and $\nabla^{0,1}$ denotes the projection of $\nabla$ onto $T^{0,1}M$. In local coordinates this can be written as

$$\nabla_i\partial_j:=\Gamma^{k}_{ij}\partial_k, \text{ where } \Gamma_{ij}^k=g^{k\bl}\partial_ig_{j\bl}$$

The torsion and the curvature tensors are

$$T_{ij}^l=\Gamma^l_{ij}-\Gamma^l_{ji}$$

\noindent and,
$$R_{i\bj k\bl }=-\partial_{\bj}\partial_i g_{k\bl}+g^{p\bq}\partial_ig_{k\bq}\partial_{\bj}g_{p\bl}$$

\noindent respectively.

\

For a function $f$, denote $f_{i\bj}=\nabla_{\bj}\nabla_i f$, $f_{ij}=\nabla_j\nabla_i f$ etc. Then we have the following equations for commuting covariant derivatives,

\begin{equation}\label{commute}
    \begin{aligned}
        &f_{i\bj k}-f_{ik \bj}= -g^{l\bm} R_{k\bj i \bm}f_l,\\
        &f_{i\bj\bk}-f_{i\bk \bj}=\overline{T_{jk}^l}f_{i\bl},\\
        &f_{i\bj k}-f_{ki\bj}=-g^{l\bm}R_{i\bj k\bm}f_l+T^l_{ik}f_{l\bj},\\
        &f_{i\bj k\bl}-f_{k\bl i\bj}=g^{p\bq}(R_{k\bl i \bq}f_{p\bj}-R_{i\bj k \bq}f_{p\bl})+T^p_{ik}f_{p\bj\bl}+\overline{T^q_{jl}}f_{i\bq k}-T^p_{ik}\overline{T^q_{jl}}f_{p\bq}
    \end{aligned}
\end{equation}

\

As in section \ref{U}, we write \eqref{I1} as 

\begin{equation}
    \begin{aligned}
        \frac{\partial \phi}{\partial t}= G(\fg)-\psi[\phi]
    \end{aligned}
\end{equation}
Differentiating this equation with respect to $z_i$ first and then wrt. $z_{\bj}$ gives,

\begin{equation}\label{diff-pde}
    \begin{aligned}
        \partial_i \phi_t&=G^{p\bq}\nabla_i\fg_{p\bq}-\nabla_{i}\psi[\phi]\\
        \partial_i\partial_{\bj}\phi_t&=G^{p\bq,s\bt}\nabla_{\bj}\fg_{s\bt}\nabla_i\fg_{p\bq}+G^{p\bq}\nabla_{\bj}\nabla_i\fg_{p\bq}-\nabla_{\bj}\nabla_i\psi[\phi]\\
        &\leq G^{p\bq}\nabla_{\bj}\nabla_i\fg_{p\bq}-\nabla_{\bj}\nabla_{i}\psi[\phi]
    \end{aligned}
\end{equation}
\noindent where the last inequality follows from concavity of $f$, and
$$G^{p\bq,s\bt}:=\dfrac{\partial^2 G}{\partial \fg_{p\bq}\partial\fg_{s\bt}}(\fg[\phi])$$

From now on we assume that $f$ satisfies \eqref{S0.1}, \eqref{S0.2} and \eqref{S0.3}.

\noindent where $\mathbf{1}=(1,\hdots,1)\in \Gamma$ and $c_0$ may depend on $|\phi|_{C^1(M)}$. 

We estimate $|\partial\bpartial \phi|$ by following \cite{GGQ21} which uses ideas of Tossati-Weinkove \cite{TW17} and consider the test function which is given in local coordinates by
\begin{equation}\label{S1}
\begin{aligned}
A:=\sup\limits_{(z,t)\in M\times [0,T)}\max\limits_{\xi\in T_z^{1,0}M}e^{(1+\gamma)\eta}\fg_{p\bq}\xi_p\overline{\xi}_q(g^{k\bl}\fg_{i\bl}\fg_{k\bj}\xi_i\overline{\xi}_j)^{\frac{\gamma}{2}}/|\xi|^{2+\gamma}
\end{aligned}
\end{equation}

\noindent where $\eta$ is a function depending on $|\nabla\phi|$, and $\gamma>0$ is a small constant to be chosen later.

\begin{theorem}
Let $\phi\in C^{4,1}_{x,t}(M\times[0,T))$ be a solution to \eqref{I1}. Then

$$\sup_{M\times [0,T)}|\partial \bpartial \phi|_g\leq C$$ 

\noindent where $C$ depends on $(M,\omega)$, $\sup|\nabla \phi|$ and $\sup{|\phi_t|}$. 
\end{theorem}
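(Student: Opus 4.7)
The plan is to apply the parabolic maximum principle to the test function $A$ in \eqref{S1}, adapting the elliptic argument of \cite{GGQ21} to the parabolic setting. Suppose $A$ attains its supremum at a point $(z_0,t_0)\in M\times [0,T)$; if $t_0=0$, the bound is immediate from $\phi_0$, so we assume $t_0>0$. Working in a local unitary frame at $z_0$ with $\fg[\phi](z_0)$ diagonal and eigenvalues $\lambda_1\geq \cdots \geq \lambda_n$, the maximizing direction is $\xi=e_1$ and, up to a universal factor, $A(z_0,t_0)=e^{(1+\gamma)\eta}\lambda_1^{1+\gamma}$. Since $\sup|\nabla\phi|$ and $\sup|\phi_t|$ are controlled (the latter by Proposition \ref{prop1}), $\eta$ is bounded and it suffices to bound $\lambda_1$; the $\gamma$-smoothing in the test function handles the potential non-smoothness at coincident eigenvalues.

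At $(z_0,t_0)$ the parabolic maximum principle gives $\partial_i\log A=0$ and $\cL(\log A)\geq 0$, where $\cL:=\partial_t-G^{i\bj}\nabla_i\nabla_{\bj}$ is the linearized operator. Differentiating \eqref{U1} twice in the $e_1\otimes \bar{e}_1$ direction as in \eqref{diff-pde} (using concavity \eqref{P3} to discard the $G^{p\bq,s\bt}$ term) and commuting covariant derivatives via \eqref{commute} produces a pointwise inequality of schematic form
\begin{equation*}
  \cL(\log \lambda_1) \leq C\sum_i f_i + \frac{|\nabla^2\psi|+|\partial_t X_{1\bar 1}|}{\lambda_1} - \frac{G^{i\bj}\partial_i\lambda_1\,\partial_{\bj}\lambda_1}{\lambda_1^2}.
\end{equation*}
The last, nonpositive gradient term couples to $\cL(\eta)$ through the critical-point identity $\partial_i\lambda_1/\lambda_1=-\partial_i\eta$. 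Choosing $\eta$ depending on $\phi$ and $|\nabla\phi|^2$ as in \cite{GGQ21} makes $\cL(\eta)\leq -\epsilon\lambda_1\sum f_i+\text{l.o.t.}$, which supplies a definite negative term proportional to $\lambda_1$ on the right-hand side.

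The principal obstacle is controlling contributions from indices $i$ where $f_i$ is very small: concavity of $f$ alone does not eliminate the third-order error terms along those directions. This is where the critical rank hypothesis \eqref{S0.2} enters via Lemma \ref{lemma 3}: a definite proportion of the $f_i$ satisfies $f_i\geq c_0\sum_j f_j$, and the partial-Laplacian structure $\Lambda_I(\lambda)=\sum_{i\in I}\lambda_i$ couples the good directions to every other index, enabling a Cauchy--Schwarz-and-split argument that carries over verbatim from the elliptic case. Combining this with \eqref{S0.1} (lower bound for $\sum f_i\lambda_i$) and \eqref{S0.3} (definite positivity of $\sum f_i$ at large $|\lambda|$), for $\lambda_1$ beyond a controlled threshold the right-hand side of $\cL(\log A)\geq 0$ becomes strictly negative, a contradiction. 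Hence $\lambda_1(z_0,t_0)\leq C$, proving $\sup_{M\times[0,T)}|\partial\bpartial\phi|_g\leq C$.
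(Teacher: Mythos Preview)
Your outline follows the paper's strategy, but two of your schematic claims misstate the mechanism in ways that would prevent the argument from closing. First, the choice $\eta=-\log(1-\gamma|\nabla\phi|^2)$ (depending on $|\nabla\phi|^2$ only) does \emph{not} produce a term of the form $-\epsilon\lambda_1\sum f_i$. What it actually yields, via the expansion of $\bpartial_i\partial_i|\nabla\phi|^2$, is the quadratic good term $c\gamma\sum_i G^{i\bi}|A_i|^2$ with $|A_i|^2:=\fg_{i\bi}^2+\sum_k|\nabla_i\nabla_k\phi|^2$; after multiplication by $\fg_{1\bar1}$ the inequality to be closed is $\fg_{1\bar1}\sum_i G^{i\bi}|A_i|^2\leq C|A|^2\big(1+\sum G^{i\bi}\big)$ (cf.\ \eqref{S22}, \eqref{S28}, \eqref{S29}), not a simple competition of $\lambda_1\sum f_i$ against constants. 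The bad terms on the right are genuinely of size $|A|^2$, so a linear-in-$\lambda_1$ good term would not suffice.

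Second, your description of the rank condition is inaccurate. Here $f$ is a function of the partial sums $\Lambda_I$, and $G^{i\bi}=\sum_{I\ni i}f_{\Lambda_I}$ is a sum over exactly $NK/n$ indices. Lemma~\ref{lemma 3} applied with the rank bound \eqref{S0.2} says that the sum of the $NK/n$ \emph{smallest} $f_{\Lambda_l}$'s already dominates $c_0\sum_l f_{\Lambda_l}$; hence \emph{every} $G^{i\bi}\geq c_0\sum_l f_{\Lambda_l}$, which is \eqref{S30}. This is not the statement that ``a definite proportion of the $f_i$ satisfies $f_i\geq c_0\sum f_j$'', nor is any further Cauchy--Schwarz splitting needed: \eqref{S30} converts $\sum_i G^{i\bi}|A_i|^2\geq c_0(\sum_l f_{\Lambda_l})|A|^2$ directly, and the right-hand side is absorbed for $\fg_{1\bar1}$ large. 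The final step uses concavity together with \eqref{S0.1} and \eqref{S0.3} to bound $\sqrt{\fg_{1\bar1}}\sum f_{\Lambda_l}$ from below, which is what forces $\fg_{1\bar1}\leq C$.
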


\begin{proof}
Assume that $A$ is acheived at a point $(z_0,t_0)\in M\times [0,T)$ for some $\xi\in T_{z_0}^{1,0}M$. We choose local coordinates around $z_0$ such that $g_{i\bj}=\delta_{ij}$ and $T_{ij}^k=2\Gamma^k_{ij}$ using the lemma of Streets and Tian \cite{ST09}, and that $\fg_{i\bj}$ is diagonal at $z_0$ with $\fg_{1\bar{1}}\geq \fg_{2\bar{2}}\geq\hdots \geq \fg_{n\bn}$.

The maximum $A$ is achieved for $\xi=\partial_1$ at $(z_0,t_0)$ when $\gamma>0$ is sufficiently small (see \cite{TW17} and \cite{GN21}). We can assume $\fg_{1\bar{1}}>1$; otherwise we are done. 

Let $W = g_{1\bar{1}}^{-1} g^{k\bl} \fg_{1\bl} \fg_{k\bar{1}}$.
We see that the function
$Q=(1+\gamma)  \eta+\log{ g_{1\bar{1}}^{-1} \fg_{1\bar{1}}}+ \dfrac{\gamma}{2}\log{W}$ which is  locally well defined
attains a maximum $(1+\gamma){\eta}+(1+\gamma)\log{ \fg_{1\bar{1}}}$ at $(z_0,t_0)$ where
$W = \fg_{1\bar{1}}^2$ and the following equations are obtained.
\begin{equation}
\label{S2}
 \begin{aligned}
\frac{\partial_i (g_{1\bar{1}}^{-1} \fg_{1\bar{1}})}{\fg_{1\bar{1}}}
   +  \frac{\gamma \partial_i W}{2 W} + (1+ \gamma)  \partial_i \eta = \,& 0, \\
\frac{\bpartial_i (g_{1\bar{1}}^{-1} \fg_{1\bar{1}})}{\fg_{1\bar{1}}}
   +\frac{ \gamma \bpartial_i W}{2 W} + (1+ \gamma) \bpartial_i \eta = \,& 0
\end{aligned}
\end{equation}
for each $1 \leq i \leq n$, and
\begin{equation}
\label{S3}
\begin{aligned}
0 \geq \,&
   \frac{1}{\fg_{1\bar{1}}} G^{i\bi} \bpartial_i \partial_i (g_{1\bar{1}}^{-1} \fg_{1\bar{1}})
   - \frac{1}{\fg_{1\bar{1}}^2} G^{i\bi}
     \partial_i (g_{1\bar{1}}^{-1} \fg_{1\bar{1}}) \bpartial_i (g_{1\bar{1}}^{-1} \fg_{1\bar{1}}) \\
  & + \frac{\gamma}{2 W} G^{i\bi} \bpartial_i \partial_i W
     - \frac{\gamma}{2 W^2} G^{i\bi} \partial_i W \bpartial_i W
   + (1 + \gamma) G^{i\bi} \bpartial_i \partial_i \eta.
\end{aligned}
\end{equation}

The following identities can be derived by direct calculation.

%\[ \partial_i (g_{1\bar{1}}^{-1} \fg_{1\bar{1}}) = \nabla_i \fg_{1\bar{1}}, \;\;
%\bpartial_j (g_{1\bar{1}}^{-1} \fg_{1\bar{1}}) = \nabla_{\bj} \fg_{1\bar{1}}, \]
\begin{equation}
\label{S4}
\begin{aligned}
\partial_i (g_{1\bar{1}}^{-1} \fg_{1\bar{1}}) = \nabla_i \fg_{1\bar{1}}, \;\;
 \partial_i W %= \,& \partial_i (g_{1\bar{1}}^{-1} g^{k\bl} \fg_{k\bar{1}} \fg_{1\bl}) \\
 %= \,& g_{1\bar{1}}^{-1}  \fg_{1\bl} \partial_i (g^{k\bl} \fg_{k\bar{1}})
 %     + g^{k\bl} \fg_{k\bar{1}} \partial_i (g_{1\bar{1}}^{-1} \fg_{1\bl})
 = 2 \fg_{1\bar{1}} \nabla_i \fg_{1\bar{1}},
     \end{aligned}
      \end{equation}
%\[  \begin{aligned}
%     \partial_i (g_{1\bar{1}}^{-1} \fg_{1\bar{1}})  = \,& \nabla_i \fg_{1\bar{1}},
 %    \end{aligned}    \]
\begin{equation}
\label{S5}
 \begin{aligned}
\bpartial_j  \partial_i (g_{1\bar{1}}^{-1} \fg_{1\bar{1}})
    % = \,& g_{1\bar{1}}^{-1}  \partial_i  \fg_{1\bl}
    %  -  g_{1\bar{1}}^{-2}  \fg_{1\bl} \partial_i  g_{1\bar{1}} \\
  %= \,& \nabla_{\bj} \nabla_i \fg_{1\bar{1}}
 %   + (\ol{\Gamma_{j1}^m} \nabla_i \fg_{1\bm}
 %       - \ol{\Gamma_{j1}^1} \nabla_i \fg_{1\bar{1}}) \\
  %    & + (\Gamma_{i1}^m \nabla_{\bj} \fg_{m\bar{1}}
  %          - \Gamma_{i1}^1 \nabla_{\bj} \fg_{1\bar{1}})
  %    + (\bpartial_j g_{1\bar{1}}  \Gamma_{i1}^1  -  \bpartial_j g_{m\bar{1}}   \Gamma_{i1}^m) \fg_{1\bar{1}} \\
       = \,& \nabla_{\bj} \nabla_i \fg_{1\bar{1}}
    + (\ol{\Gamma_{j1}^m} \nabla_i \fg_{1\bm}
        - \ol{\Gamma_{j1}^1} \nabla_i \fg_{1\bar{1}}) \\
      & + (\Gamma_{i1}^m \nabla_{\bj} \fg_{m\bar{1}}
            - \Gamma_{i1}^1 \nabla_{\bj} \fg_{1\bar{1}})
      + (\Gamma_{i1}^1  \ol{\Gamma_{j1}^1} - \Gamma_{i1}^m \ol{\Gamma_{j1}^m}) \fg_{1\bar{1}}.
     \end{aligned}  
 \end{equation}     
      and
\begin{equation}
\label{S6}
 \begin{aligned}
\bpartial_j \partial_i W
   = \,& 2 \fg_{1\bar{1}} \nabla_{\bj} \nabla_i \fg_{1\bar{1}}
   + 2 \nabla_i \fg_{1\bar{1}} \nabla_{\bj} \fg_{1\bar{1}}
   + \sum_{l>1} \nabla_i \fg_{l\bar{1}} \nabla_{\bj} \fg_{1\bl}    \\
     &  + \sum_{l > 1} (\nabla_i \fg_{1\bl} + {\Gamma_{i1}^l} \fg_{l\bl})
       (\nabla_{\bj} \fg_{l\bar{1}} + \ol{\Gamma_{j1}^l} \fg_{l\bl}) \\
   & + \fg_{1\bar{1}} \sum_{l > 1} (\ol{\Gamma_{j1}^l} \nabla_i \fg_{1\bl}
       + \Gamma_{i1}^l  \nabla_{\bj} \fg_{l\bar{1}}) \\
       %+ \sum_{l > 1} \fg_{l\bl} \Gamma_{i1}^l  \nabla_{\bj} \fg_{l\bar{1}}    \\
  &    -  \fg_{1\bar{1}} \sum_{l > 1} \Gamma_{i1}^m \ol{\Gamma_{j1}^m} (\fg_{1\bar{1}} +  \fg_{l\bl)}.
              \end{aligned}
 \end{equation}
It follows that
\begin{equation}
\label{S7}
\begin{aligned}
G^{i\bi} \partial_i W \bpartial_i W
= 4 \fg_{1\bar{1}}^2 G^{i\bi} \nabla_i \fg_{1\bar{1}} \nabla_{\bi} \fg_{1\bar{1}},
\end{aligned}
\end{equation}
\begin{equation}
\label{S8}
\begin{aligned}
G^{i\bi} \partial_i (g_{1\bar{1}}^{-1} \fg_{1\bar{1}}) \bpartial_i (g_{1\bar{1}}^{-1} \fg_{1\bar{1}})
= G^{i\bi} \nabla_i \fg_{1\bar{1}} \nabla_{\bi} \fg_{1\bar{1}}
\end{aligned}
\end{equation}
and by Cauchy-Schwarz inequality,
\begin{equation}
\label{S9}
 \begin{aligned}
G^{i\bi} \bpartial_i \partial_i W
\geq \,& 2 \fg_{1\bar{1}} G^{i\bi} \nabla_{\bi} \nabla_i \fg_{1\bar{1}}
    + 2 G^{i\bi}  \nabla_i \fg_{1\bar{1}} \nabla_{\bi} \fg_{1\bar{1}} \\
  + \,& \sum_{l > 1} G^{i\bi} \nabla_i \fg_{1\bl} \nabla_{\bi} \fg_{l\bar{1}}
  + \frac{1}{2} \sum_{l > 1} G^{i\bi} \nabla_i \fg_{1\bl} \nabla_{\bi} \fg_{l\bar{1}}
   - C \fg_{1\bar{1}}^2 \sum G^{i\bi},
 \end{aligned}
 \end{equation}
\begin{equation}
\label{S10}
 \begin{aligned}
G^{i\bi} \bpartial_i  \partial_i (g_{1\bar{1}}^{-1} \fg_{1\bar{1}})
\geq \,& G^{i\bi} \nabla_{\bi} \nabla_i \fg_{1\bar{1}}
 %  + 2 F^{i\bi}  \nabla_i \fg_{1\bar{1}} \nabla_{\bi} \fg_{1\bar{1}} \\
  - \frac{\gamma}{8 \fg_{1\bar{1}}} \sum_{l > 1} G^{i\bi}
            \nabla_i \fg_{1\bl} \nabla_{\bi} \fg_{l\bar{1}}
        - C \fg_{1\bar{1}} \sum G^{i\bi}.
 \end{aligned}
 \end{equation}

We derive from \eqref{S2}, \eqref{S3} and 
 \eqref{S4}-\eqref{S10} that
  \begin{equation}
\label{S11}
 \begin{aligned}
\nabla_i \fg_{1\bar{1}} + \fg_{1\bar{1}} \partial_i \eta = 0, \;
\nabla_{\bi} \fg_{1\bar{1}} + \fg_{1\bar{1}} \bpartial_i \eta = 0
\end{aligned}
\end{equation}
and
%plugging \eqref{gblq-C103}-\eqref{gblq-C101} into \eqref{gblq-C90'}, we derive
 \begin{equation}
\label{S12}
\begin{aligned}
0  \geq \,& \frac{1}{\fg_{1\bar{1}}}  G^{i\bi} \nabla_{\bi} \nabla_i \fg_{1\bar{1}}
 - \frac{1}{\fg_{1\bar{1}}^2} G^{i\bi}  \nabla_i \fg_{1\bar{1}} \nabla_{\bi} \fg_{1\bar{1}}
   + G^{i\bi} \bpartial_i \partial_i \eta \\
 & + \frac{\gamma}{\fg_{1\bar{1}}^2} \sum_{l > 1} G^{i\bi} \nabla_i \fg_{1\bl} \nabla_{\bi} \fg_{l\bar{1}} + \frac{\gamma}{16 \fg_{1\bar{1}}^2} \sum_{l > 1} G^{i\bi}
            \nabla_i \fg_{1\bl} \nabla_{\bi} \fg_{l\bar{1}}
        - C \sum G^{i\bi}.
\end{aligned}
\end{equation}
%Here the constant $C$ depends on torsion but not on curvature.

Now using \eqref{commute},

\begin{equation}
\label{S13}
 \begin{aligned}
 \nabla_{\bi} \nabla_i \fg_{1\bar{1}}  -  \nabla_{\bar{1}} \nabla_1 \fg_{i\bi}
%= \,& \fg_{1\bar{1}i \bi} - \fg_{i \bi 1\bar{1}} \\
   = \,&  R_{i\bi 1\bar{1}} \fg_{1\bar{1}} - R_{1\bar{1} i\bi} \fg_{i\bi}
         - T_{i1}^l \nabla_{\bi} \fg_{l\bar{1}}  - \ol{T_{i1}^l} \nabla_i \fg_{1\bl} \\
     &  - T_{i1}^l  \ol{T_{i1}^l} \fg_{l\bl} + H_{i\bi}
  \end{aligned}
 \end{equation}
  where
\[ \begin{aligned}
    H_{i\bi} = \,& \nabla_{\bi} \nabla_i X_{1\bar{1}}
                       -  \nabla_{\bar{1}} \nabla_1 X_{i\bi}
      - 2 \fRe\{T_{i1}^l \nabla_{\bi} X_{l\bar{1}}\} % - \ol{T_{i1}^l} \nabla_i \chi_{1\bl}
      + R_{i\bi 1\bl} X_{l\bar{1}} - R_{1\bar{1} i\bl} X_{l\bi}
       - T_{i1}^j  \ol{T_{i1}^l} X_{j\bl}.
%       \geq \,& 
  \end{aligned} \]
 It follows from Schwarz inequality that %\eqref{gblq-R155}
\begin{equation}
\label{S14}
 \begin{aligned}
G^{i\bi} \nabla_{\bi} \nabla_i \fg_{1\bar{1}}
   \geq \,& G^{i\bi} \nabla_{\bar1} \nabla_1\fg_{i\bi}
         %R_{k\bk i\bj} \fg_{j\bj} - R_{i\bj k\bk} \fg_{k\bk}
         %+ 2 F^{i\bi} \fRe \{T_{i1}^1 \nabla_{\bi} \fg_{1\bar{1}}\} \\
         -  \frac{\gamma}{32 \fg_{1\bar{1}}} %\sum_{i > 1}
             \sum\limits_{l>1}G^{i\bi}  \nabla_i \fg_{1\bl} \nabla_{\bi} \fg_{l\bar{1}} \\
       &  - C \fg_{1\bar{1}} \sum G^{i\bi} + G^{i\bi} H_{i\bi}.
 \end{aligned}
 \end{equation}
 
 \noindent We also have at $(z_0,t_0)$,
 \begin{equation}\label{S15}
     \begin{aligned}
     0\leq \partial_tQ&=(1+\gamma)\eta_t+(1+\gamma)\frac{\partial_t\fg_{1\bar{1}}}{\fg_{1\bar{1}}}\\
     &\leq(1+\gamma)\left(\eta_t+\frac{1}{\fg_{1\bar 1}}\left(G^{i\bi}\nabla_{\bar 1}\nabla_{1}\fg_{\bi i}-\nabla_1\nabla_{\bar{1}}\psi+\partial_t X_{1\bar{1}}\right)\right)
     \end{aligned}
 \end{equation}
 
 \noindent where we used \eqref{diff-pde}. Combining \cref{S12,S14,S15},
 
 \begin{equation}
 \label{S16}
     \begin{aligned}
     0\geq G^{i\bi}\bpartial_i\partial_i\eta-\eta_t+\frac{G^{i\bi}H_{i\bi}}{\fg_{1\bar{1}}}-C\sum G^{i\bi}-\frac{G^{i\bi}\nabla_i\fg_{1\bar{1}}\nabla_{\bi}\fg_{1\bar{1}}}{\fg_{1\bar{1}}^2}+\frac{\nabla_1\nabla_{\bar{1}}\psi}{\fg_{1\bar{1}}}-\frac{\partial_t X_{1\bar{1}}}{\fg_{1\bar{1}}}
     \end{aligned}
 \end{equation}
 
 \noindent It follows using \eqref{S11} that 
 
 \begin{equation}
 \label{S17}
     \begin{aligned}
     \fg_{1\bar{1}}\left(G^{i\bi}\bpartial_i\partial_i\eta-G^{i\bi}\partial_i\eta\bpartial_{i}\eta-\eta_t\right)\leq -G^{i\bi}H_{i\bi}+\partial_{t}X_{1\bar{1}}-\nabla_1\nabla_{\bar 1}\psi+C\fg_{1\bar{1}}\sum G^{i\bi}
     \end{aligned}
 \end{equation}

 \noindent By direct calculations (see e.g. \cite{GN21}, \cite{GQY19}) and using \cref{S11,diff-pde},
\begin{equation}
\label{S18}
\begin{aligned}
G^{i\bi} H_{i \bi} 
   \geq \,&  2 G^{i\bi} \fRe\{X_{1 \bar{1}, \zeta_{\alpha}}
                     \nabla_{\alpha} \nabla_{\bi} \nabla_i   \phi\}
                 -  2 G^{i\bi} \fRe\{X_{i \bi, \zeta_{\alpha}} \nabla_{\alpha} 
                     \nabla_{\bar{1}} \nabla_1 \phi\} \\
   %  & - 2 G^{i\bi} \fRe\{\chi_{i\bi  \bar{1}, \zeta_{\alpha}} \partial_1  \partial_{\alpha} u\}
   %     - C G^{i\bi}  \fg_{i\bi}^2 - Q \\
           & - C \fg_{1\bar{1}}^2 \sum G^{i\bi}  
               - C \sum_{i, k} G^{i\bi} |\nabla_i  \nabla_{k} \phi|^2
               - C \sum_k |\nabla_1 \nabla_k \phi|^2 \sum G^{i\bi}  \\
 \geq \,& 2 \fRe\{X_{1 \bar{1}, \zeta_{\alpha}}  (\nabla_{\alpha} \psi+\phi_{t\alpha})\}
          + 2 \fg_{1\bar{1}} G^{i\bi} \fRe\{X_{i \bi, \zeta_{\alpha}}  \nabla_{\alpha} \eta\} 
          - C |A|^2 \sum G^{i\bi}.
             \end{aligned}
 \end{equation}  
where we denote
\[ |A_i|^2 = \fg_{i\bi}^2 + \sum_{k} |\nabla_i \nabla_k \phi|^2, \;\; 
    |A|^2 =  \sum |A_i|^2. \]
Next, 
\begin{equation}
\label{S19}
\nabla_{\alpha} \psi = \psi_{\alpha} + \psi_{\phi} \nabla_{\alpha} \phi 
        + \psi_{\zeta_\beta} \partial_{\alpha} \partial_{\beta} \phi 
        + \psi_{\bzeta_\beta} \partial_{\alpha} \bpartial_{\beta} \phi 
        \end{equation}        

\begin{equation}
\label{S20}
\begin{aligned}
 \nabla_{\bar{1}} \nabla_1 \psi 
     \geq \,& \psi_{\zeta_\alpha} \nabla_{\alpha} \nabla_{\bar{1}} \nabla_1 \phi
                  + \psi_{\bzeta_\alpha} \nabla_{\balpha} \nabla_{\bar{1}} \nabla_1  \phi 
                  -  C |A_1|^2 \\ %\fg_{1\bar{1}}^2  - C \sum_k |\nabla_1 \nabla_k u|^2 \\
     \geq \,& - \fg_{1\bar{1}} \psi_{\zeta_\alpha} \nabla_{\alpha} \eta
                  - \fg_{1\bar{1}} \psi_{\bzeta_\alpha} \nabla_{\balpha} \eta -  C |A|^2. 
                  %\fg_{1\bar{1}}^2  - C \sum_{k,l} |\nabla_l \nabla_k u|^2.
\end{aligned}
\end{equation}   

\noindent and

\begin{equation}\label{S21}
    \begin{aligned}
    \partial_tX_{1\bar{1}}=X_{1\bar{1},\phi}\phi_t+2\fRe\{X_{1\bar{1},\zeta_{\alpha}}\phi_{\alpha t}\}
    \end{aligned}
\end{equation}
 
 \noindent Plug in \cref{S18,S19,S20,S21} in \eqref{S17},

 \begin{equation}\label{S22}
 \begin{aligned}
     \fg_{1\bar{1}}\left(G^{i\bi}\bpartial_i\partial_i\eta-G^{i\bi}\partial_i\eta\bpartial_{i}\eta-\eta_t\right)\leq&-\nabla_{1}\nabla_{\bar{1}}\psi-2\fRe\{X_{1\bar{1},\zeta_{\alpha}}\nabla_{\alpha}\psi\}\\
     &-2\fg_{1\bar{1}}G^{i\bi}\fRe\{X_{i\bi,\zeta_{\alpha}}\nabla_{\alpha}\phi\}+C|A|^2\sum G^{i\bi}+X_{1\bar{1},\phi}\phi_t\\
     \leq&C\left(\fg_{1\bar{1}}|\nabla\phi|^2+|A|^2\right)\left(1+\sum G^{i\bi}\right)
 \end{aligned}
 \end{equation}
 
 Let $\eta=-\log{h}$, where $h=1-\gamma|\nabla \phi|^2$. We choose $\gamma$ small enough to satisfy $2\gamma|\nabla \phi|^2\leq 1$. 
 
 By straightforward calculations,
\begin{equation}
\label{S23}
\partial_i  |\nabla \phi|^2
    =  \nabla_k \phi \nabla_i \nabla_{\bk} \phi + \nabla_{\bk} \phi \nabla_i \nabla_k \phi
\end{equation}
and 
\begin{equation}
\label{S24}
  \begin{aligned}
 \bpartial_i \partial_i |\nabla \phi|^2 
   = \,& \nabla_i \nabla_{\bk} \phi \nabla_k \nabla_{\bi} \phi 
            +  \nabla_i \nabla_{k} \phi \nabla_{\bi} \nabla_{\bk} \phi \\
         &   + \nabla_{\bk} \phi \nabla_{\bi} \nabla_i \nabla_k \phi
            + \nabla_{k} \phi \nabla_{\bi} \nabla_i \nabla_{\bk} \phi \\
   = \,& \nabla_i \nabla_{\bk} \phi \nabla_k \nabla_{\bi} \phi 
            +  \nabla_i \nabla_{k} \phi \nabla_{\bi} \nabla_{\bk} \phi \\
         &   + \nabla_{\bk} \phi \nabla_k \nabla_{\bi} \nabla_i \phi 
            +  \nabla_k \phi \nabla_{\bk} \nabla_{\bi} \nabla_i \phi\\
         &   + R_{i\bi k\bl} \nabla_l \phi \nabla_{\bk} \phi
            - T^{l}_{ik} \nabla_{l\bi} \phi \nabla_{\bk} \phi 
            - \ol{T^{l}_{ik}} \nabla_{i\bl} \phi \nabla_k  \phi \\
      %2 \fRe\{\p_{\bk} u \p_{\bi}\p_i \p_{k} u\}
       % + \p_k u \p_{\bi} \p_i \p_{\bk} u  \\
     \geq  \,& (1 - \gamma) |A_i|^2 + \nabla_{\bk} \phi \nabla_k \nabla_{\bi} \nabla_i \phi 
                   + \nabla_k \phi \nabla_{\bk} \nabla_{\bi} \nabla_i \phi    - C |\nabla \phi|^2
         %+ 2 \fRe\{T_{ik}^l \nabla_{\bi} \nabla_l u)\} \\
       %\Big(\fg_{i\bi}^2 + \sum_k |\nabla_i \nabla_{k} u|^2\Big)   
\end{aligned}
\end{equation}

%It follows that
%\[  G^{i\bi} \bpartial_i  h  \partial_i h 
     % \leq C \gamma^2 G^{i\bi} |A_i|^2
      %\Big(\fg_{i\bi}^2 + \sum_k |\nabla_i \nabla_{k} u|^2\Big)
    %+ C G^{i\bi} \bpartial_i  \eta \partial_i \eta 
    %\]

\begin{equation}\label{S25}
\begin{aligned}
G^{i\bi}(\bpartial_i\partial_i\eta-\bpartial_i\eta\partial_i\eta)=&\frac{\gamma}{h}G^{i\bi}\bpartial_i\partial_i|\nabla \phi|^2\\
\geq& G^{i\bi}\frac{\gamma(1-\gamma)}{h}|A_i|^2+\frac{\gamma}{h}G^{i\bi}\nabla_{\bk}\phi\nabla_k\nabla_\bi\nabla_i\phi +\frac{\gamma}{h}G^{i\bi}\nabla_k\phi\nabla_\bk\nabla_\bi\nabla_i\phi\\
&-C|\nabla \phi|^2\sum G^{i\bi}\\
\geq&\gamma(1-\gamma)\sum G^{i\bi}|A_i|^2+\frac{\gamma}{h}G^{i\bi}\nabla_{\bk}\phi\nabla_k\nabla_\bi\nabla_i\phi\\ &+\frac{\gamma}{h}G^{i\bi}\nabla_k\phi\nabla_\bk\nabla_\bi\nabla_i\phi-C|\nabla \phi|^2\sum G^{i\bi}
\end{aligned}
\end{equation}

We also have,

\begin{equation}\label{S26}
\begin{aligned}
\eta_t=\gamma \frac{\partial_t|\nabla \phi|^2}{h}
\end{aligned}
\end{equation}

\noindent and,

\begin{equation}\label{S27}
    \begin{aligned}
    \partial_t|\nabla u|^2=&2\fRe\{\partial_k\phi\bpartial_k\phi_t\}\\
    =&2\fRe\{\phi_k(G^{i\bi}\nabla_\bk\nabla_i\nabla_\bi\phi+G^{i\bi}\nabla_\bk X_{i\bi}-\partial_{\bk}\psi)\}
    \end{aligned}
\end{equation}

 Derive using \eqref{S25}, \eqref{S26} and \eqref{S27},
\begin{equation}\label{S28}
    \begin{aligned}
    G^{i\bi}(\bpartial_i\partial_i\eta-\bpartial_i\eta\partial_i\eta)-\eta_t\geq&\gamma(1-\gamma)\sum G^{i\bi}|A_i|^2 -C|\nabla \phi|^2\sum G^{i\bi}\\
    &-\frac{2\gamma}{h}\fRe\{\phi_k(G^{i\bi}\nabla_\bk X_{i\bi}-\partial_\bk\psi)\}\\
    \geq& (\gamma(1-\gamma)-4C\gamma^2)\sum G^{i\bi}|A_i|^2 -C|\nabla \phi|^2\sum G^{i\bi}
    \end{aligned}
\end{equation}

Further requiring that $\gamma$ is small enough to satisfy $\gamma\leq\dfrac{1}{2+4C}$, using \eqref{S22}
 \begin{equation}
\label{S29}
\begin{aligned}
%\label{gblq-C90}
\gamma^2  \fg_{1\bar{1}}
    \sum G^{i\bi}|A_i|^2 % \Big(\fg_{i\bi}^2 + \sum_k |\nabla_i \nabla_{k} u|^2\Big)               
    \leq   C |A|^2 \Big(1 + \sum G^{i\bi}\Big).
\end{aligned}
\end{equation}

From Lemma~\ref{lemma 3} we have the following key inequality for
each $i \geq 1$,
\begin{equation}
 \label{S30}
 G^{i\bi} = \sum_{l=1}^N f_{\Lambda_l} \frac{\partial \Lambda_l}{\partial \lambda_i} 
               = \sum_{i \in I} f_{\Lambda_I} \geq c_0 \sum_{l=1}^N f_{\Lambda_l}, 
\end{equation}                
where the sum $\sum_{i \in I} f_{\Lambda_I}$ is taken over all $I \in \fI_K$ with $i \in I$. 
Note that when $\fg_{i\bj}$ is diagonal,  so is $G^{i\bj}$. 

Consequently, by \eqref{S29} we obtain
  \begin{equation}
 \label{S31}
\begin{aligned}
%\label{gblq-C90}
\frac{c_0 \gamma^2 }{2} \fg_{1\bar{1}} |A|^2 \sum  f_{\Lambda_l}            
      \leq   C |A|^2 %\Big(1 + \sum F^{i\bi}\Big).
\end{aligned}
\end{equation}  
provided that $ \fg_{1\bar{1}}$ is large enough. 
  
By the concavity of $f$ and using \eqref{P6}, we derive
\[  \begin{aligned}
    \sqrt{\fg_{1\bar1}}  \sum  f_{\Lambda_l}  %F^{i\bi} 
        = \,& \sqrt{\fg_{1\bar1}} \sum  f_{\Lambda_l}  %\sum F^{i\bi} 
        - \sum  f_{\Lambda_l} \Lambda_l (\fg) + \sum  f_{\Lambda_l} \Lambda_l (\fg) \\
              %  \geq \,&  F (A \omega) - F (U) + F^{i\bj} U_{i\bj} 
   \geq \,&  f (\sqrt{\fg_{1\bar1}} {\bf{1}}) - f (\Lambda (\fg)) 
        -C_0\sum f_{\Lambda_l} \\
        \geq \,&  \frac{c_0}{2} -C_0\sum f_{\Lambda_l}
        %\epsilon \sum F^{i\bi} U_{i\bi}^2 - \frac{1}{4 \epsilon} \sum F^{i\bi} 
  \end{aligned} \]
by assumption~\eqref{S0.3}, provided that $\fg_{1\bar1}$ is sufficiently large.
So from \eqref{S31}  we obtain
 \begin{equation}
 \label{S32}
\begin{aligned}
%\label{gblq-C90}
\fg_{1\bar{1}} |A|^2 \sum f_{\Lambda_l} + \sqrt{\fg_{1\bar{1}}}|A|^2            
   %(\bpartial_i \partial_i \eta - C \bpartial_i  \eta \partial_i \eta)
      \leq   C |A|^2. %\Big(1 + \sum F^{i\bi}\Big).
\end{aligned}
\end{equation}

This gives the upper bound $\fg_{1\bar{1}}\leq C$. To obtain a lower bound for the eigenvalues $\fg_{i\bi}$, note that $\tr(\fg_{i\bi}+X)\geq 0$. This follows because the domain of $f$ is a symmetric cone in $\mathbb{R}^N$ with vertex at $0$ that contains the positive cone $\Gamma_N\subset\Gamma$.

\section{Gradient Estimates}

\label{G}
\setcounter{equation}{0}
\medskip

In this section we assume that $X$ and $\psi$ satisfies conditions \eqref{G0.1}, \eqref{G0.3}, \eqref{G0.5} and \eqref{G0.6}. With these assumptions in place, the gradient estimates can now be derived.

\begin{theorem}
Let $\phi\in C^{3,1}_{x,t}(M\times[0,T))$ be a solution of the equation \eqref{I1} in $M\times [0,T)$. Then 
\begin{equation}\label{G0}
    |\nabla \phi|_g^2\;\leq C(1+\sup{\phi}-\phi)
\end{equation}
for a uniform constant $C$ that depends on $\sup{|\phi_t|}$.
\end{theorem}

\begin{proof}
By adding a constant if necessary, we can assume without loss of generality that 
$$\sup\limits_{\partial \Gamma}f\leq 0 < \psi$$
Let $P= \eta+\log{|\nabla \phi|^2}$ where $\eta$ is a function of $\phi$ to be chosen later. Assume that $P$ attains maximum at the point $(z_0,t_0)\in M\times [0,T)$ and $|\nabla \phi|\geq 1$ at this point. We also choose local coordinates around $z_0$ so that $g_{i\bar j}=\delta_{i\bar j}$, $T^k_{ij}=2\Gamma^k_{ij}$ and $\fg_{i\bj}$ are diagonal at $z_0$. We have at $(z_0,t_0)$,

\begin{equation}\label{G1}
    \begin{aligned}
        &\partial_i|\nabla \phi|^2+|\nabla \phi|^2\partial_i\eta=0\\
        &\bpartial_i|\nabla \phi|^2+|\nabla \phi|^2\bpartial_i\eta=0
    \end{aligned}
\end{equation}

\noindent and,

\begin{equation}\label{G2}
\begin{aligned}
    G^{i\bi}\bpartial_i\partial_iP-\partial_tP
    =&G^{i\bi}\frac{\bpartial_i\partial_{i}|\nabla \phi|^2}{|\nabla \phi|^2}-G^{i\bi}\frac{\bpartial_i|\nabla \phi|^2\partial_i|\nabla \phi|^2}{|\nabla \phi|^4}+
    G^{i\bi}\bpartial_i\partial_i\eta\\
    &-\frac{\partial_t|\nabla \phi|^2}{|\nabla \phi|^2}-\partial_t\eta\leq 0
\end{aligned}
\end{equation}

Define $|Q_i|^2=\nabla_i\nabla_\bk\phi\nabla_k\nabla_\bi\phi+\nabla_i\phi\nabla_k\phi\nabla_\bi\phi\nabla_\bk\phi=\sum\limits_k(|\nabla_i\nabla_\bk\phi|^2+|\nabla_i\nabla_k\phi|^2)$. By Schwarz inequality,

\begin{equation}\label{G3}
    \begin{aligned}
        \bpartial_{i}|\nabla \phi|^2\partial_i|\nabla \phi|^2\leq 2|\nabla \phi|^2|Q_i|^2
    \end{aligned}
\end{equation}

\noindent and,

\begin{equation}\label{G4}
\begin{aligned}
    \bpartial_i\partial_i|\nabla \phi|^2=&\nabla_i\nabla_\bk\phi\nabla_k\nabla_{\bar{i}}\phi+\nabla_i\nabla_k\phi\nabla_\bi\nabla_\bk\phi\\
    &+\nabla_\bk\phi\nabla_k\nabla_\bi\nabla_i\phi+\nabla_k\phi\nabla_\bk\nabla_\bi\nabla_i\phi\\
    &+R_{i\bi k \bl}\nabla_l\phi \nabla_\bk\phi-T^l_{ik}\nabla_{l\bi}\phi\nabla_\bk\phi-\overline{T^i_{ik}}\nabla_{i\bl}\phi\nabla_k\phi\\
    &\geq (1-\gamma)|Q_i|^2+\nabla_\bk\phi\nabla_k\nabla_\bi\nabla_i\phi+\nabla_k\phi\nabla_\bk\nabla_\bi\nabla_i\phi-C|\nabla \phi|^2
\end{aligned}
\end{equation}

\noindent where $0<\gamma<\dfrac{1}{6}$. Also, 

\begin{equation}\label{G5}
\begin{aligned}
   G^{i\bi}\nabla_k\nabla_\bi\nabla_i\phi=G^{i\bi}(\nabla_k\fg_{i\bi}-\nabla_k X_{i\bi})=\nabla_k\psi+\nabla_k\phi_t-G^{i\bi}\nabla_kX_{i\bi} 
\end{aligned}
\end{equation}
\noindent Hence,

\begin{equation}\label{G6}
    \begin{aligned}
        G^{i\bi}\bpartial_i\partial_i|\nabla\phi|^2\geq& G^{i\bi}(1-\gamma)|Q_i|^2-C|\nabla \phi|^2\sum G^{i\bi}+R\\
        &+2\fRe\{\nabla_k\phi_t\nabla_\bk\phi\}
    \end{aligned}
\end{equation}

\noindent where $R=2\fRe\{(\nabla_k\psi-G^{i\bi}\nabla_kX_{i\bi})\nabla_{\bk}\phi\}$.

\begin{equation}\label{G6.1}
    \frac{\partial_t|\nabla\phi|^2}{|\nabla\phi|^2}=\frac{2}{|\nabla\phi|^2}\fRe\{\nabla_k\phi_t\nabla_\bk\phi\}
\end{equation}

Combine equations \eqref{G1}, \eqref{G2}, \eqref{G6} and \eqref{G6.1} to get cancellation of the terms involving $|Q_i|^2$ and $\nabla_k\phi_t$.

\begin{equation}\label{G7}
    \begin{aligned}
        G^{i\bi}\bpartial_i\partial_i\eta-\frac{1+\gamma}{2}G^{i\bi}\bpartial_i\eta\partial_i\eta\leq& -\frac{R}{|\nabla \phi|^2}+C\sum G^{i\bi}+\eta_t
    \end{aligned}
\end{equation}

Now we choose $\eta=-\log{h}$, where $h=1+\sup\limits_{M\times [0,T)}\phi-\phi$. So,

\begin{equation}\label{G8}
\begin{aligned}
    G^{i\bi}\bpartial_i\partial_i\eta=\frac{1}{h}G^{i\bi}\bpartial_i\partial_i\phi+\frac{1}{h^2}G^{i\bi}\bpartial_i\phi\partial_i\phi
\end{aligned}
\end{equation}

\noindent and,

\begin{equation}\label{G9}
\begin{aligned}
    G^{i\bi}\bpartial_i\eta\partial_i\eta= \frac{1}{h^2}G^{i\bi}\bpartial_i\phi\partial_i\phi
\end{aligned}
\end{equation}

From \eqref{S30} it follows that 

\begin{equation}\label{G10}
\begin{aligned}
    \frac{1}{h^2}G^{i\bi}\bpartial_i\phi\partial_i\phi-\frac{1+\gamma}{2}G^{i\bi}\bpartial_i\eta\partial_i\eta&= \frac{1-\gamma}{2h^2}G^{i\bi}\bpartial_i\phi\partial_i\phi\\
    &\geq \frac{c_1|\nabla\phi|^2}{4h^2}\sum G^{i\bi}
    \end{aligned}
\end{equation}

By concavity of $f$ and assumption \eqref{P6},

\begin{equation}\label{G11}
    \begin{aligned}
        |\nabla\phi|^2\sum G^{i\bi}&\geq f(|\nabla\phi|^2\mathbf{1})-f(\Lambda)+G^{i\bi}\fg_{i\bi}\\
        &\geq f(|\nabla\phi|^2\mathbf{1})-\psi-\phi_t-C\sum G^{i\bi}
    \end{aligned}
\end{equation}

Similarly,

\begin{equation}\label{G12}
    G^{i\bi}\bpartial_i\partial_i\phi=G^{i\bi}\fg_{i\bi}-G^{i\bi}X_{i\bi}\geq -G^{i\bi}X_{i\bi}-C\sum G^{i\bi}
\end{equation}

Combining the above inequalities and using \eqref{G7} we derive,

\begin{equation}\label{G13}
    \begin{aligned}
        \frac{c_1|\nabla\phi|^2}{8h^2}\sum G^{i\bi}+\frac{c_1}{8h^2}f(|\nabla\phi|^2\mathbf 1)&\leq -\frac{1}{h}G^{i\bi}\bpartial_i\partial_i\phi+\frac{c_1(\psi+\phi_t)}{8h^2}+C\sum G^{i\bi}-\frac{R}{|\nabla\phi|^2}+\eta_t\\
        &\leq \frac{1}{h}G^{i\bi}X_{i\bi}
        +\frac{c_1(\psi+\phi_t)}{8h^2}-\frac{R}{|\nabla\phi|^2}+\frac{\phi_t}{h}+C\sum G^{i\bi}
    \end{aligned}
\end{equation}

Using \eqref{G1} and chain rule we obtain,

\begin{equation}\label{G14}
    \begin{aligned}
        \fRe\{\nabla_k\psi\nabla_\bk\phi\}&=\psi_{\phi}|\nabla \phi|^2+\fRe\{\psi_k\nabla_\bk\phi+\psi_{\zeta_{\alpha}}\partial_{\alpha}|\nabla\phi|^2+\psi_{\zeta_{\alpha}}\Gamma_{\alpha k}^l\nabla_l\phi\nabla_{\bk}\phi\}\\
        &=|\nabla\phi|^2(\psi_{\phi}-\fRe\{\psi_{\zeta_{\alpha}}\partial_{\alpha}\eta\})+\fRe\{\psi_k\nabla_{\bk}\phi+\psi_{\zeta_{\alpha}}\Gamma_{\alpha k}^l\nabla_l\phi\nabla_\bk\phi\}\\
        &=|\nabla\phi|^2A
    \end{aligned}
\end{equation}
\noindent where

$$ A=\psi_{\phi}-\frac{1}{h}\fRe\{\psi_{\zeta_{\alpha}}\partial_{\alpha}\phi\}+\frac{1}{|\nabla\phi|^2}\fRe\{\psi_k\nabla_{\bk}\phi+\psi_{\zeta_{\alpha}}\Gamma_{\alpha k}^l\nabla_l\phi\nabla_\bk\phi\} $$

\noindent Similarly,
\begin{equation}\label{G15}
    \begin{aligned}
        G^{i\bi}\fRe\{\nabla_\bk\phi\nabla_kX_{i\bi}\}=|\nabla \phi|^2B
    \end{aligned}
\end{equation}
\noindent where 

$$B=G^{i\bi}X_{i\bi,\phi}-\frac{1}{h}G^{i\bi}\fRe\{X_{i\bi,\zeta_{\alpha}}\partial_{\alpha}\phi\}+\frac{1}{|\nabla\phi|^2}G^{i\bi}\fRe\{(X_{i\bi, k}+X_{i\bi,\zeta_{\alpha}}\Gamma^l_{\alpha k}\nabla_l\phi)\nabla_{\bk}\phi\}$$

By assumptions \eqref{G0.1} and \eqref{G0.3},

\begin{equation}\label{G15.1}
    \begin{aligned}
        \frac{1}{h}G^{i\bi}X_{i\bi}+\frac{c_1\psi}{8h^2}-\frac{R}{|\nabla\phi|^2}\leq C H\sum G^{i\bi}+CE+ C\left(1+\sum G^{i\bi}\right)
    \end{aligned}
\end{equation}

\noindent where
$$E=|\nabla_z\psi||\nabla \phi|^{-1}+(\psi_{\phi})^-+\psi^{+}+|D_{\zeta}\psi||\nabla\phi|\leq \varrho_0f(|\nabla\phi|^2\mathbf{1})+\varrho_1(z,\phi)$$

\noindent by \eqref{G0.3}, \eqref{G0.4}, \eqref{G0.6} and,

$$H=|\nabla_zX||\nabla \phi|^{-1}+\tr{X^+}+\tr(D_{\phi}X)^++|D_{\zeta}X||\nabla \phi|\leq \varrho_0|\nabla \phi|^2+\varrho_1$$

\

\noindent by \eqref{G0.1}, \eqref{G0.2}, \eqref{G0.5}. Use these inequalities to estimate the LHS of \eqref{G15.1} and plug into \eqref{G13} to obtain the bound $|\nabla \phi|^2\leq C$. From $P(z,t)\leq P(z_0,t_0)\leq C$, the required estimate \eqref{G0} follows.

\end{proof}

As a consequence we can bound the oscillation of $\phi$.

\begin{corollary}\label{cor1}
For $\phi$ as above,
$$\left|\left(1+\sup{\phi}-\phi(x,t)\right)^{\frac{1}{2}}-\left(1+\sup{\phi}-\phi(y,s)\right)^{\frac{1}{2}}\right|\leq C d$$

\noindent for any $(x,t)$, $(y,s)$ in $M\times [0,T)$, where $d$ is the diameter of $M$. 
In particular,

\begin{equation}\label{G16}
    \sup{\phi}-\inf{\phi}\leq C\max\{d,d^2\}
\end{equation}

\end{corollary}

\begin{proof}
Follows directly from the gradient estimates by using mean value theorem.
\end{proof}

\

\section{Long-time existence of solutions}

\label{L}
\setcounter{equation}{0}
\medskip
We shall prove the first part of Theorem \ref{theorem-I1} now. Recall that the normalized solution $\bar{\phi}$ solves,
\begin{equation}
\label{L0.1}
\begin{aligned}
    &\frac{\partial \bar{\phi}}{\partial t}= f(\Lambda(\sqrt{-1}\partial\bpartial {\bar{\phi}} +X[{\phi}]))-\psi[{\phi}]-\int_M\frac{\partial\phi}{\partial t}\omega^n\\
    &\bar{\phi}(x,0)=\phi_0-\int_M\phi_0\omega^n
\end{aligned}
\end{equation}

\noindent where $\phi$ is a solution of \eqref{I1}.

 \
 
Since $\int_M\bar{\phi}\omega^n=0$, there must be a $y\in M$ such that $\bar{\phi}(y)=0$. Using \eqref{G16}, 

\begin{equation}\label{L1}
\begin{aligned}
  |\bar{\phi}(x)|=&|\bar{\phi}(x)-\bar{\phi}(y)|\\
    =&|\phi(x)-\phi(y)|\leq C\max\{d,d^2\}
\end{aligned}
\end{equation}

Thus we obtain a uniform estimate for the normalized solution $\bar{\phi}$.

\

The second order estimate derived in section \ref{S} implies that equation \eqref{I1} is uniformly parabolic. Hence by general parabolic theory, equation \eqref{I1} has an admissible solution for some time $[0,T)$, where $T>0$ is the maximum time for which solution exists. Combined with the uniform apriori estimate $|\bar{\phi}|_{2,\alpha}\leq C$ from the previous sections, it will follow that $T=\infty$. Note that here $C^{2,\alpha}$ estimate followed directly once the $C^2$ estimate is established as a consequence of the Evans-Krylov theorem for parabolic equations.

To show $T=\infty$, first extend the $C^{2,\alpha}$ estimate for $\bar{\phi}$ to a uniform $C^{\infty}$ estimate by the standard bootstrapping argument. We sketch the idea here. Differentiate \eqref{I1} with respect to $z_l$,

\begin{equation}\label{L2}
\frac{\partial\bar{\phi}_l}{\partial t}=G^{i\bj}\partial_i\bpartial_j\bar{\phi}_l+\chi_k[{\phi}](\bar{\phi}_l)_k+\chi_{\bar{k}}[{\phi}](\bar{\phi}_l)_{\bar k}+\chi_0[{\phi}]\bar{\phi}_l
\end{equation}

\noindent where the coefficient functions are as in section \ref{U}. This is a linear parabolic equation in $\bar{\phi}_l$ whose coefficients are in $C^{\alpha}$ with $\chi_0\leq 0$. Hence by parabolic Schauder estimates, we get that $|\bar{\phi}_l|_{2,\alpha}\leq C$ for a uniform constant $C$. Similarly $|\bar{\phi}_\bl|_{2,\alpha}\leq C$. By inductively applying this argument to higher derivatives it follows that $|\bar{\phi}|_{C^{\infty}(M)}\leq C$. Similarly we obtain $|\bar{\phi}_t|_{C^{\infty}(M)}\leq C$ by applying the same technique on \eqref{U4}.

\ 

To prove $T=\infty$, assume for contradiction that $T<\infty$. Then the solution $\bar{\phi}$ of \eqref{L0.1} can be extended to $T$ using the apriori estimates. Now \eqref{L0.1} with initial data $\bar{\phi}(.,T)$ is a parabolic PDE starting at time $T$ with smooth initial data. Hence the solution can be extended to $[0,T+\epsilon)$, for some $\epsilon>0$. This contradicts the maximality of $T$. Thus the solution exists for all time $[0,\infty)$. The long time existence of the solution $\phi$ also follows similarly after obtaining an estimate (possibly depending on $T$) for $\sup{|\phi|} $.

\end{proof}

\section{Harnack inequality}
\label{H}
\setcounter{equation}{0}
\medskip

In this section we will derive a Harnack inequality for the time derivative $\phi_t$ of solutions of \eqref{I1}. For this purpose, we extend the results of Gill \cite{Gill11} and Li-Yau \cite{LY86} to parabolic equations with lower order terms. More precisely, consider the following equation,

\begin{equation}
    \label{H1}
    \begin{aligned}
    \frac{\partial u}{\partial t}=  G^{i\bar j}\partial_i\partial_{\bar j}u+\chi_ku_k+\chi_{\bar{k}}u_{\bar k}+\chi_0 u
    \end{aligned}
\end{equation}

\noindent where $G^{i\bj}$, $\chi_k$, $\chi_{\bk}$ and $\chi_{0}$ are time-dependent functions with $G^{i\bar j}$ being $C_{x,t}^{3,1}$ and $\chi_k$, $\chi_{\bk}$, $\chi_{0}$  are assumed to be $C_{x,t}^{1,1}$.

Let $u$ be a positive solution of \eqref{H1} in $M\times [0, T)$ for some $T>0$. Define $f=\log u$ and $F=t(|\partial f|^2-\alpha f_t)$, where $|\partial f|^2=G^{i\bar j} f_i f_{\bar j}$ and $1<\alpha <2$.  $G^{i\bar j}$ is assumed to be uniformly elliptic with $0<\lambda|\xi|^2\leq G^{i\bar j}\xi_i\xi_{\bar j}\leq \Lambda |\xi|^2 $ in $M$ for any vector $\xi$. Also denote $\langle X,Y \rangle=G^{i\bar j} X_i Y_j$. All the norms and inner products in this section will be computed with respect to $G^{i\bj}$.\\

\begin{lemma}\label{lemma2.1}
Let $u\in C_{x,t}^{3,2}(M\times [0,T))$ be a positive solution of \eqref{H1} in $[0,T)$. Then for $t>0$
\begin{equation}
\label{H2}
\begin{aligned}
      |\partial f|^2-\alpha f_t\leq C_1+\frac{C_2}{t}  
\end{aligned}
\end{equation}

\noindent for some constants $C_1$ and $C_2$ that depends only on the coefficient functions $G^{i\bj}$, $\chi_k$, $\chi_{\bk}$ and $\chi_0$.
\end{lemma}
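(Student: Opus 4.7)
The plan is to adapt the differential Harnack method of Li--Yau and Gill to the operator in \eqref{H1}, accounting for the first-order drift $\chi_k\partial_k + \chi_{\bar k}\partial_{\bar k}$ and the zeroth-order term $\chi_0$. Setting $f=\log u$, equation \eqref{H1} becomes
\begin{equation*}
    L f := f_t - G^{i\bar j} f_{i\bar j} - \chi_k f_k - \chi_{\bar k} f_{\bar k} = |\partial f|^2 + \chi_0.
\end{equation*}
Since $M$ is compact, one may run the maximum principle directly on $F = t(|\partial f|^2 - \alpha f_t)$ over $M\times[0,T']$ for $T'<T$ without a spatial cutoff: either $F\le 0$ on $M\times[0,T']$, or $F$ attains a positive maximum at some $(x_0,t_0)$ with $t_0>0$, at which $\partial_i F = \partial_{\bar i} F = 0$, $G^{i\bar j}\partial_i\partial_{\bar j} F \le 0$ and $\partial_t F \ge 0$, so $LF\ge 0$ there.

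First I would compute $LF$ by evaluating $L(|\partial f|^2)$ and $L(f_t)$ separately. A Hermitian Bochner identity (with Chern torsion, curvature of the background, and drift absorbed into a lower-order term $E_1$) gives schematically
\begin{equation*}
    L(|\partial f|^2) = 2\,\fRe\langle\partial f, \partial(L f)\rangle - |\nabla\partial f|^2 - |\partial\bpartial f|^2 + (\partial_t G^{i\bar j})f_i f_{\bar j} + E_1,
\end{equation*}
with $|\nabla\partial f|^2 = G^{i\bar j}G^{p\bar q}f_{ip}f_{\bar j\bar q}$, $|\partial\bpartial f|^2 = G^{i\bar j}G^{p\bar q}f_{i\bar q}f_{p\bar j}$, and $|E_1| \le C(|\nabla\partial f|\,|\partial f| + |\partial\bpartial f|\,|\partial f| + |\partial f|^2 + 1)$ via the $C^{3,1}_{x,t}$ and $C^{1,1}_{x,t}$ regularity of the coefficients. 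Differentiating $Lf = |\partial f|^2 + \chi_0$ in $t$ and commuting $\partial_t$ past $L$ produces
\begin{equation*}
    L(f_t) = \partial_t|\partial f|^2 + \partial_t\chi_0 + (\partial_t G^{i\bar j})f_{i\bar j} + (\partial_t\chi_k)f_k + (\partial_t\chi_{\bar k})f_{\bar k}.
\end{equation*}
Substituting $Lf = |\partial f|^2 + \chi_0$ into $\partial(Lf)$ and combining yields an explicit expression for $LF = (|\partial f|^2-\alpha f_t) + t_0(L|\partial f|^2 - \alpha L f_t)$.

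At $(x_0,t_0)$ the identity $\partial_i F = 0$ gives $\partial_i|\partial f|^2 = \alpha\,\partial_i f_t$, so that the cross-term $2\,\fRe\langle\partial f,\partial|\partial f|^2\rangle - \alpha\,\partial_t|\partial f|^2$ reduces to a multiple of $(\alpha-1)\fRe\langle\partial f,\partial f_t\rangle$, controllable by $\varepsilon|\nabla\partial f|^2 + C_\varepsilon|\partial f|^4$. Using $|\partial\bpartial f|^2 \ge (\Delta_G f)^2/n$ together with $\Delta_G f = f_t - |\partial f|^2 - \chi_k f_k - \chi_{\bar k}f_{\bar k} - \chi_0$, the inequality $LF \ge 0$ rearranges via routine Young inequalities into a quadratic
\begin{equation*}
    \frac{(|\partial f|^2 - \alpha f_t)^2}{n} \le C_3\big(1+|\partial f|^2\big)(|\partial f|^2 - \alpha f_t) + C_4\big(1+|\partial f|^4\big) + \frac{|\partial f|^2 - \alpha f_t}{t_0},
\end{equation*}
whose solution in $|\partial f|^2 - \alpha f_t$ yields $F(x_0,t_0)\le C_1 t_0 + C_2$; letting $T'\uparrow T$ gives \eqref{H2}. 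The main obstacle is ensuring that cross-terms of shape $|\partial f|^2 f_t$, which are sign-indefinite, can be absorbed into the quadratic $(|\partial f|^2-\alpha f_t)^2$ on the left; this is exactly where the restriction $1<\alpha<2$ is used, and it must be paired with careful bookkeeping of the Chern-connection torsion/curvature contributions and of the $C^{1,1}_{x,t}$ drift terms so that they remain only $O(|\nabla\partial f|\,|\partial f|)$ pieces absorbable by $\varepsilon|\nabla\partial f|^2$.
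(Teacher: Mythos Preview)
Your overall strategy---applying the Li--Yau maximum principle to $F=t(|\partial f|^2-\alpha f_t)$ on the compact manifold and combining a Bochner-type expansion with $|\partial\bpartial f|^2\ge (G^{i\bj}f_{i\bj})^2/n$---is precisely the approach the paper takes. However, your endgame contains a genuine gap.

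First, the good quadratic that emerges from $(G^{i\bj}f_{i\bj})^2/n$ together with \eqref{H4} is $\tfrac{1}{n}(|\partial f|^2-f_t)^2$, \emph{not} $\tfrac{1}{n}(|\partial f|^2-\alpha f_t)^2$; the two differ by exactly the sign-indefinite $|\partial f|^2 f_t$ cross-term you flag. Second, your proposed error $C_4(1+|\partial f|^4)$ does not actually arise: at the maximum point $\partial_i|\partial f|^2=\alpha\partial_i f_t$, so $2\fRe\langle\partial f,\partial|\partial f|^2\rangle$ and $\alpha\,\partial_t|\partial f|^2$ cancel up to an $O(|\partial f|^2)$ term coming from $\partial_t G^{i\bj}$, and there is no $(\alpha-1)\fRe\langle\partial f,\partial f_t\rangle$ leftover to produce a quartic. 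This is fortunate, because your displayed quadratic inequality with $C_4|\partial f|^4$ on the right would only yield $|\partial f|^2-\alpha f_t\le C(1+|\partial f|^2)+C/t_0$, which does not close.

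What one actually obtains (and what the paper derives) is
\[
0\;\ge\;-(|\partial f|^2-\alpha f_t)-C t_0|\partial f|^2+\frac{t_0}{4n}(|\partial f|^2-f_t)^2-Ct_0
\]
at $(x_0,t_0)$. The mismatch between $(|\partial f|^2-f_t)^2$ and $|\partial f|^2-\alpha f_t$ is then resolved not by an algebraic trick with $1<\alpha<2$, but by a case split on the sign of $f_t(x_0,t_0)$: when $f_t\ge 0$ one extracts $|\partial f|^2-f_t\le C|\partial f|+C/t_0$ and uses $\alpha>1$ to absorb $C|\partial f|$ into $(1-\tfrac1\alpha)|\partial f|^2$; when $f_t<0$ one bounds $|\partial f|^2$ and $-f_t$ separately by pairing the two inequalities that drop the other term from the square. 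You should replace the final ``solve the quadratic'' step with this dichotomy.
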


\begin{proof} In the following calculations $C$, $C_1$ and $C_2$ will denote generic constants that may change from line to line. 
We will apply maximum principle to $F$. Let $(x_0,t_0)$ be a point in $M\times (0,T']$ where F attains maximum. Here $0<T'<T$ is a fixed time. 

Then we have at $(x_0,t_0)$,

\begin{equation}
\label{H3}
\begin{aligned}
    \partial_k |\partial f|^2&=\alpha f_{kt}\\
    \partial_{\bar k} |\partial f|^2&=\alpha f_{\bar kt}
\end{aligned}
\end{equation}

From \eqref{H1} we derive

\begin{equation}
\label{H4}
\begin{aligned}
    G^{i\bar j}f_{i\bar j}-f_t=-|\partial f|^2-\chi_kf_k-\chi_{\bar{k}}f_{\bar k}-\chi_0
\end{aligned}
\end{equation}

Plugging this in $F$ gives,

\begin{equation}
\label{H5}
\begin{aligned}
    F&=-tG^{i\bar j}f_{i\bar j}+t(1-\alpha)f_t-t\chi_kf_k-t \chi_{\bar{k}}f_{\bar k}-t\chi_0
\end{aligned}
\end{equation}

Next compute $F_t$ and $G^{i\bar j}F_{i\bar j}$.
\begin{equation}
\label{H6}
\begin{aligned}
    F_t=|\partial f|^2-\alpha f_t +2t \fRe\langle\partial f,\partial f_t \rangle+t\partial_tG^{i\bar j}f_if_{\bar j}-\alpha tf_{tt} 
\end{aligned}
\end{equation}

\noindent and,

\begin{equation}
\label{H7}
\begin{aligned}
    G^{i\bar j}F_{i\bar j}&= tG^{i\bar j}\biggl[\partial_i\partial _{\bar j}G^{k\bar l}f_{k}f_{\bar l}+\partial_iG^{k\bar l}f_{k\bar j}f_{\bar l}+\partial_{i}G^{k\bar l}f_{k}f_{\bar l\bar j}+\partial_{\bar j}G^{k\bar l}f_{\bar l}f_{ki}+\partial_{\bar j}G^{k\bar l}f_{k}f_{\bar l i}\\ 
    &+G^{k\bar l}f_{ki}f_{\bar l\bar j}+G^{k\bar l}f_{k\bar j}f_{\bar l i}+G^{k\bar l}f_{ki\bar j}f_{\bar l}+G^{k\bar l}f_{k}f_{\bar l i \bar j}-\alpha f_{ti\bar j}\biggr]
\end{aligned}
\end{equation}

We now estimate all the terms in the above equation. Consider the first five terms in \eqref{H7}. By Cauchy-Schwarz inequality,

\begin{equation}
\label{H8}
\begin{aligned}
   tG^{i\bar j}\biggl[\partial_i\partial _{\bar j}G^{k\bar l}f_{k}f_{\bar l}&+\partial_iG^{k\bar l}f_{k\bar j}f_{\bar l}+\partial_{i}G^{k\bar l}f_{k}f_{\bar l\bar j}+\partial_{\bar j}G^{k\bar l}f_{\bar l}f_{ki}+\partial_{\bar j}G^{k\bar l}f_{k}f_{\bar l i}\biggr] \\
    &\leq C\left[ t|\partial f|^2+\frac{2t}{\epsilon}|\partial f|^2+ t\epsilon |\partial \bpartial f|^2+t\epsilon|\partial\partial f|^2\right] 
\end{aligned}
\end{equation}

\noindent where $\epsilon>0$ is a small constant to be chosen later. Here $|\partial \bpartial f|^2=G^{i\bar j}G^{k\bar l}f_{i\bar l}f_{k\bar j}$ and $|\partial\partial f|^2=G^{i\bar j}G^{k \bar l}f_{ik}f_{\bar j\bar l}$.

Write the third order terms in \eqref{H7} using \eqref{H5} as follows.

\begin{equation}
\label{H9}
\begin{aligned}
   tG^{i\bar j}G^{k\bar l}f_{ki\bar j}f_{\bar l}+tG^{i\bar j}G^{k\bar l}f_{k}f_{\bar l i \bar j} &= 2t \fRe \langle \partial f, \partial (G^{i\bar j}f_{i\bar j})\rangle-tG^{k\bar l}\partial_{k}G^{i\bar j}f_{i\bar j}f_{\bar l}-tG^{k\bar l}\partial_{\bar l}G^{i\bar j}f_{k}f_{i\bar j}\\
   &\geq 2t \fRe\langle\partial f,\partial (G^{i\bar j}f_{i\bar j}) \rangle-\frac{Ct}{\epsilon}|\partial f|^2-t\epsilon|\partial \bpartial f|^2\\
   &= -2\fRe\langle\partial f, \partial F \rangle+2t(1-\alpha)\fRe\langle \partial f, \partial f_t\rangle\\
   & -2t \fRe\langle\partial f, \partial [\chi_kf_k+\chi_{\bar k}f_{\bar k}+\chi_0] \rangle-\frac{Ct}{\epsilon}|\partial f|^2-t\epsilon|\partial \bpartial f|^2  
\end{aligned}
\end{equation}

We can write,

\begin{equation}
\label{H10}
\begin{aligned}
    \biggl|\fRe\langle\partial f, \partial [\chi_kf_k+\chi_{\bar k}f_{\bar k}+\chi_0] \rangle\biggr|&=\biggl| \fRe\langle \partial f,\partial \chi_kf_k\rangle+\fRe\langle \partial f,\chi_k\partial f_k\rangle+\fRe\langle \partial f,\partial \chi_{\bar k}f_{\bar k}\rangle \\
    &+\fRe\langle \partial f,\chi_{\bar k}\partial f_{\bar k}\rangle+\fRe\langle \partial f,\partial \chi_0\rangle\biggr|\\
    &\leq C\left(|\partial f|^2+|\langle \partial f,\partial\partial f\rangle|+|\langle \partial f,\partial \bpartial f\rangle|\right)\\
    &\leq \left(C+\frac{2C}{\epsilon}\right)|\partial f|^2+\epsilon |\partial \partial f|^2+\epsilon|\partial \bpartial f|^2 
\end{aligned}
\end{equation}

Now combining \eqref{H10}, \eqref{H9} and \eqref{H6},

\begin{equation}
\label{H11}
\begin{aligned}
    tG^{i\bar j}G^{k\bar l}f_{ki\bar j}f_{\bar l}+tG^{i\bar j}G^{k\bar l}f_{k}f_{\bar l i \bar j} &\geq -2\fRe\langle \partial f, \partial F\rangle -(\alpha -1)F_t+(\alpha -1)(|\partial f|^2-\alpha f_t)\\
    &-C_2t|\partial f|^2 -t\alpha(\alpha -1)f_{tt}-2t\left(C+\frac{3C}{\epsilon}\right)|\partial f|^2\\
    &-2t\epsilon |\partial\partial f|^2-3t\epsilon|\partial \bpartial f|^2
\end{aligned}
\end{equation}

To estimate the last term in \eqref{H7}, we differentiate \eqref{H5} wrt $t$.

\begin{equation}
\label{H12}
\begin{aligned}
    t\frac{\partial}{\partial t}(G^{i\bar j}f_{i\bar j})=\frac{F}{t}-F_t+t(1-\alpha)f_{tt}-t(\partial_t\chi_kf_k+\chi_kf_{kt}+\partial_t\chi_{\bar k}f_{\bar k}+\chi_{\bar k}f_{\bar{k}t}+\partial_t\chi_0)
\end{aligned}
\end{equation}

Use \eqref{H3} to control $f_{kt}$ and $f_{\bar k t}$ terms above.

\begin{equation}
\label{H13}
\begin{aligned}
    |\chi_kf_{kt}+\chi_{\bar k}f_{\bar k t}|=\frac{1}{\alpha}\left|\chi_k \partial_k|f|^2+\chi_{\bar k} \partial_{\bar k}|f|^2\right|\leq \frac{C}{\alpha\epsilon}|\partial f|^2+\frac{\epsilon}{2\alpha} |\partial \partial f|^2+\frac{\epsilon}{2\alpha}|\partial \bpartial f|^2
\end{aligned}
\end{equation}

Now estimate the last term in \eqref{H7} as follows.
\begin{equation}
\label{H14}
\begin{aligned}
    -\alpha tG^{i\bar j }f_{ti\bar j}=&\alpha t \partial_tG^{i\bar j}f_{i\bar j}-\alpha t\frac{\partial }{\partial t}(G^{i\bar j}f_{i\bar j})\\ 
    \geq& -\frac{Ct}{\epsilon}-t\epsilon |\partial \bpartial f|^2-\frac{\alpha}{t}F+\alpha F_t+t\alpha(\alpha -1)f_{tt}-t(C_1|\partial f|^2\\
    &+\epsilon |\partial \partial f|^2+\epsilon |\partial \bpartial f|^2+ C_2)
\end{aligned}
\end{equation}

\noindent where we used \eqref{H12} and \eqref{H13} in the last inequality. Combine \cref{H7,H8,H11,H14} to get

\begin{equation}
\label{H15}
\begin{aligned}
    G^{i\bar j}F_{i\bar j}\geq &F_t-2\fRe\langle \partial f,\partial F\rangle -(|\partial f|^2-\alpha f_t)-Ct|\partial f|^2+t(1-(5+C)\epsilon)|\partial \bpartial f|^2\\
    &+t(1-(3+C)\epsilon)|\partial \partial f|^2-Ct
\end{aligned}
\end{equation}

Choose $\epsilon=\dfrac{1}{2(5+C)}$. Also by \eqref{H4},

 \begin{equation}
 \label{H16}
 \begin{aligned}
     |\partial \bpartial f|^2&\geq \frac{1}{n}(G^{i\bar j}f_{i\bar j})^2=\frac{1}{n}(|\partial f|^2-f_t+(\chi_kf_k+\chi_{\bar k}f_{\bar k}+\chi_0))^2\\
     &\geq \frac{1}{2n}(|\partial f|^2-f_t)^2-C_1|\partial f|^2-C_2
 \end{aligned}
 \end{equation}
 
Plugging this above and using $\partial F=0$ and $G^{i\bar j}F^{i\bar j}-F_t\leq 0$ at $(x_0,t_0)$, we get

\begin{equation}
\label{H17}
\begin{aligned}
    0\geq-(|\partial f|^2-\alpha f_t)-Ct_0|\partial f|^2+\frac{t_0}{4n}(|\partial f|^2-f_t)^2-Ct_0
\end{aligned}
\end{equation}

The rest of the proof can be completed by splitting into two cases when $f_t(x_0, t_0)$ is non-negative and when it is negative, similar to \cite{Gill11}. For convenience, we provide the details here.

First assume that $f_t(x_0,t_0)\geq 0$, then we can deduce from the above equation,

\begin{equation}
    \label{H18}
    \begin{aligned}
        \frac{1}{4n}(|\partial f|^2-f_t)\left(|\partial f|^2-f_t-\frac{4n}{t_0}\right)\leq C_1|\partial f|^2+C_2
    \end{aligned}
\end{equation}

So it follows that,

\begin{equation}
\label{H19}
    |\partial f|^2-f_t\leq C_1|\partial f|+\frac{C_2}{t_0}+C_3
\end{equation}
Using Schwarz inequality we have,

\begin{equation}
\label{H20}
    C_1|\partial f|\leq \left(1-\frac{1}{\alpha}\right)|\partial f|^2+C_4
\end{equation}
Plug this in \eqref{H19} to get,

\begin{equation}
\label{H21}
    \frac{1}{\alpha}|\partial f|^2-f_t\leq C_1+\frac{C_2}{t_0}
\end{equation}
For any $x\in M$,

\begin{equation}
    \label{H22}
    \begin{aligned}
        F(x,T')&\leq F(x_0,t_0)\\
        &\leq C_1t_0+C_2\leq C_1T' +C_5
    \end{aligned}
\end{equation}

Now the result follows from the definition of $F$ and taking $T'=t$. For the case when $f_t(x_0,t_0)<0$, from \eqref{H17},

\begin{equation}
    \label{H23}
    \frac{t_0}{4n}|\partial f|^4-|\partial f|^2\leq C_1 t_0|\partial f|^2+C_2t_0-\alpha f_t
\end{equation}

Factor this to get,

\begin{equation}
    \label{H24}
    |\partial f|^2\left(\frac{1}{4n}|\partial f|^2-\frac{1}{t_0}-C_1\right)\leq C_2-\frac{\alpha}{t_0}f_t
\end{equation}

It follows that,

\begin{equation}
    \label{H25}
    |\partial f|^2\leq C_1+\frac{1}{t_0}+C\sqrt{-\frac{1}{t_0}f_t}\leq  C_1+\frac{C_2}{t_0}-\frac{1}{2}f_t
\end{equation}

By \eqref{H17} and using $f_t(x_0,t_0)<0$ we get

\begin{equation}
    \label{H26}
    \frac{1}{4n}(-f_t)\left(-f_t-\frac{4n\alpha}{t_0}\right)\leq C_1 |\partial f|^2+\frac{1}{t_0}|\partial f|^2+C_2
\end{equation}

This implies

\begin{equation}
    \label{H27}
    -f_t\leq \frac{4n\alpha}{t_0}+C_1|\partial f|+C\frac{|\partial f|}{\sqrt{t_0}}+C_2
\end{equation}

Applying Cauchy-Schwarz inequality to the above gives,

\begin{equation}
\label{H28}
    -f_t\leq C_1+\frac{C_2}{t_0}+\frac{|\partial f|^2}{2}
\end{equation}

Plug \eqref{H28} into \eqref{H25} to get,
\begin{equation}
    \label{H29}
    |\partial f|^2\leq C_1+\frac{C_2}{t_0}
\end{equation}

Using this in \eqref{H28} we deduce,

\begin{equation}
    \label{30}
    -\alpha f_t\leq C_1+\frac{C_2}{t_0}
\end{equation}

Adding the above two equations gives an estimate similar to \eqref{H21}.
\begin{equation}
    \label{31}
     |\partial f|^2-\alpha f_t\leq C_1+\frac{C_2}{t_0}
\end{equation}

Now the proof is completed in the same way as in the first case.

\end{proof}

We use this lemma to derive a Harnack inequality along the lines of Li and Yau.
\begin{theorem}\label{theorem-H}
Let $u$ be a solution of \eqref{H1} as in Lemma \ref{lemma2.1}. Then for $0<t_1<t_2$,

\begin{equation}
    \sup_{x\in M} u(x,t_1)\leq C(t_1,t_2)\inf_{x\in M}u(x,t_2)
\end{equation}
for
\begin{equation}
    C(t_1,t_2)=\left(\frac{t_2}{t_1}\right)^{C_2}\exp\left(\frac{C_3}{t_2-t_1}+C_1(t_2-t_1)\right)
\end{equation}

\noindent where $C_1$, $C_2$ and $C_3$ are constants depending only on the $C^{3,1}_{x,t}(M\times[0,T))$ norm of $G^{i\bar{j} }$ and $C^{1,1}_{x,t}(M\times[0,T))$ norms of $\chi_0$, $\chi_k$, $\chi_{\bar k}$.
\end{theorem}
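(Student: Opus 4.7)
The plan is to convert the pointwise differential estimate of Lemma~\ref{lemma2.1} into an integrated Harnack inequality via the classical path-integration argument of Li-Yau~\cite{LY86}. This carries over to our setting essentially unchanged because the drift and zeroth-order terms of \eqref{H1} have already been absorbed into the constants $C_1, C_2$ of Lemma~\ref{lemma2.1}.

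Fix $0 < t_1 < t_2$ and choose arbitrary points $x_1, x_2 \in M$; eventually I would take $x_1$ to attain $\sup u(\cdot, t_1)$ and $x_2$ to attain $\inf u(\cdot, t_2)$. Let $\gamma : [t_1, t_2] \to M$ be a minimizing $\omega$-geodesic from $x_1$ to $x_2$ traversed at constant $\omega$-speed, so that $\int_{t_1}^{t_2} |\dot\gamma|_\omega^2\, ds = d(x_1, x_2)^2/(t_2 - t_1)$. Writing $f = \log u$ and differentiating along the spacetime curve $(\gamma(s), s)$, Cauchy-Schwarz in the $G^{i\bar j}$ inner product gives
\[
\frac{d}{ds} f(\gamma(s), s) = f_t + 2\fRe(f_i \dot\gamma^i) \geq f_t - 2|\partial f|\, |\dot\gamma|_G,
\]
where $|\dot\gamma|_G^2 = G_{i\bar j} \dot\gamma^i \dot{\bar\gamma}^j$ with $G_{i\bar j}$ the matrix inverse of $G^{i\bar j}$. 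Substituting $\alpha f_t \geq |\partial f|^2 - C_1 - C_2/s$ from Lemma~\ref{lemma2.1} and completing the square in $|\partial f|$ yields
\[
\frac{d}{ds} f(\gamma(s), s) \geq -\alpha |\dot\gamma|_G^2 - \frac{C_1}{\alpha} - \frac{C_2}{\alpha s}.
\]

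Integrating from $s = t_1$ to $s = t_2$ and using uniform ellipticity in the form $|\dot\gamma|_G^2 \leq \lambda^{-1}|\dot\gamma|_\omega^2$, I would obtain
\[
\log \frac{u(x_2, t_2)}{u(x_1, t_1)} \geq -\frac{\alpha\, d(x_1,x_2)^2}{\lambda(t_2 - t_1)} - \frac{C_1}{\alpha}(t_2 - t_1) - \frac{C_2}{\alpha}\log\frac{t_2}{t_1}.
\]
Compactness of $M$ bounds $d(x_1, x_2)$ by the diameter, so the first term on the right is at least $-C_3/(t_2 - t_1)$ for a constant $C_3$ depending on $\alpha$, $\lambda$, and $(M, \omega)$. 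Taking the sup over $x_1$ and inf over $x_2$ and exponentiating reproduces the stated inequality, with the theorem's $C_2$ identified with $C_2/\alpha$ from Lemma~\ref{lemma2.1}, and similarly for $C_1$.

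Because Lemma~\ref{lemma2.1} has already absorbed the real analytic difficulty, this step poses no substantial obstacle; in that sense the main obstacle is entirely in the derivation of the differential Harnack estimate, not in the conversion to an integrated one. The only point requiring care is the metric conversion between the $G^{i\bar j}$-norm on covectors (as used in Lemma~\ref{lemma2.1}) and the $\omega$-norm on tangent vectors used to parametrize the connecting geodesic, which is handled at once by the uniform ellipticity bound.
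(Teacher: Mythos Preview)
Your proposal is correct and follows essentially the same Li--Yau path-integration argument as the paper: both connect two spacetime points by a curve, differentiate $f=\log u$ along it, complete the square in $|\partial f|$, invoke Lemma~\ref{lemma2.1}, and integrate. The differences are purely cosmetic (you parametrize by $s\in[t_1,t_2]$ going forward in time while the paper uses $s\in[0,1]$ going backward, and you are more explicit about the dual-metric Cauchy--Schwarz $|f_i\dot\gamma^i|\le |\partial f|\,|\dot\gamma|_G$ and the ellipticity conversion $|\dot\gamma|_G^2\le\lambda^{-1}|\dot\gamma|_\omega^2$), but the content is identical.
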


\begin{proof}

Let $\gamma: [0,1]\to M$ be a unit speed curve such that $\gamma(0)=y$ and $\gamma(1)=x$. Then we define a path $\eta: [0,1]\to M\times [t_1,t_2]$ joining $(y,t_2)$ to $(x,t_1)$ by $\eta(s)=(\gamma(s), (1-s)t_2+st_1 ) $. We can write,

\begin{equation}
    \begin{aligned}
        \log\frac{u(x,t_1)}{u(y,t_2)}&=\int_0^1\frac{d}{ds}f(\eta(s))ds\\
        &=\int_0^1\langle \dot{\gamma},\partial f\rangle-(t_2-t_1)f_t\;ds\\
        &\leq\int_0^1 -\frac{t_2-t_1}{\alpha}\left(\frac{\alpha |\dot{\gamma}|}{t_2-t_1}-|\partial f|\right)^2+\frac{\alpha |\dot{\gamma}|^2}{t_2-t_1}+\frac{t_2-t_1}{\alpha}(|\partial f|^2-\alpha f_t) ds\\
        &\leq \int_0^1 \frac{C}{t_2-t_1}+C(t_2-t_1)\left(1+\frac{1}{(1-s)t_2+st_1}\right) ds\\
        &= C_1(t_2-t_1)+C_2\log\left(\frac{t_2}{t_1}\right)+\frac{C_3}{t_2-t_1}
    \end{aligned}
\end{equation}
\noindent where Lemma \ref{lemma2.1} is used in the fourth line. The final equation is obtained by taking exponentials on both sides followed by infimum in $y$ and supremum in $x$ over $M$.

\end{proof}

\section{Convergence of the solution}

\label{C}
\setcounter{equation}{0}
\medskip
In this section we assume that $X$ and $\psi$ are independent of $\phi$ (but still depends on $\partial \phi,\bpartial \phi$). 
To show convergence of the solution we will use a standard iteration argument for the oscillation of the solution. Define $u=\phi_t$ as before and consider the following functions. 

\begin{equation}\label{C1}
\begin{aligned}
    v_n(x,t)&=\sup\limits_{y\in M}u(y,n-1)-u(x,n-1+t)\\
    w_n(x,t)&=u(x,n-1+t)-\inf\limits_{y\in M}u(y,n-1)
    \end{aligned}
\end{equation}

\noindent The oscillation of $u$ is defined as a function of $t$ by $\omega(t):=\sup\limits_{x\in M}u(x,t)-\inf\limits_{x\in M}u(x,t)$. Then both $v_n$ and $w_n$ satisfy the following PDE.

\begin{equation}\label{C2}
\begin{aligned}
    \frac{\partial \varphi}{\partial t}(x,t)&=G^{i\bj}(x,n-1+t)\partial_i\partial_{\bj}\varphi+\chi_k(x,n-1+t)\partial_k\varphi+\chi_\bk(x,n-1+t)\partial_\bk \varphi 
\end{aligned}
\end{equation}

Note that $\chi_0=G^{i\bj}X_{i\bar j,\phi}-\psi_{\phi}\equiv 0$ by assumption. If $u(x,n-1)$ is not constant then $v_n$ is positive for some $x$ in $M$ at time $t=0$. This implies that $v_n$ is positive for all $t>0$ by the maximum principle. Likewise for $w_n$. So by applying Theorem \ref{theorem-H} to $v_n$ and $w_n$ with $t_1=\frac{1}{2}$ and $t_2=1$,

\begin{equation}\label{C3}
    \begin{aligned}
\sup\limits_{x\in M}u(x,n-1)-\inf\limits_{x\in M}u\left(x,n-\frac{1}{2}\right)&\leq C\left(\sup\limits_{x\in M}u(x,n-1)-\sup\limits_{x\in M}u(x,n)\right)\\
\sup\limits_{x\in M}u\left(x,n-\frac{1}{2}\right)-\inf\limits_{x\in M}u(x,n-1)&\leq C\left(\inf\limits_{x\in M}u(x,n)-\inf\limits_{x\in M}u(x,n-1)\right)
    \end{aligned}
\end{equation}
\noindent where $C:=C(\frac{1}{2},1)$. By adding the two equations above, we see that $\omega(t)$ satisfies the following recursion.

\begin{equation}\label{C4}
    \begin{aligned}
    \omega(n-1)+\omega\left(n-\frac{1}{2}\right)\leq C(\omega(n-1)-\omega(n))
    \end{aligned}
\end{equation}
It follows that $\omega(n)\leq \delta \omega(n-1)$ for some $\delta<1$ and by iterating we get that $\omega(t)\leq Ce^{-\beta }t$ for $\beta=-\log{\delta}$. If $u(x,n-1)$ is constant the same estimate holds by maximum principle applied to $v_n$. Fix $(x,t)\in M\times [0,\infty)$. Since $\int_M\frac{\partial \bar{\phi}}{\partial t}\omega^n=0$, there is a point $y\in M$ such that $\frac{\partial \bar{\phi}}{\partial t}(y,t)=0$. Hence,

\begin{equation}\label{C5}
    \begin{aligned}
    \left|\frac{\partial \bar{\phi}}{\partial t}(x,t)\right|=\left|\frac{\partial \phi}{\partial t}(x,t)-\frac{\partial \phi}{\partial t}(y,t)\right|\leq Ce^{-\beta}t
    \end{aligned}
\end{equation}

Now $h(t)=\bar{\phi}+\dfrac{Ce^{-\beta t}}{\beta}$ satisfies $\dfrac{\partial h}{\partial t}\leq 0$. So $h(t)$ is bounded and monotonically decreasing for each $x$. Denote the limit function by $\bar{\phi}_{\infty}$. From the definition of $h(t)$ it is clear that $\bar{\phi}$ converges pointwise in $x$ to the same function $\bar{\phi}_{\infty}$ as $t \to \infty$.

\

To show that the convergence is smooth, we assume for contradiction that there exists a sequence of times $\{t_l\}$ such that,

\begin{equation}\label{C6}
    |\bar{\phi}(.,t_l)-\bar{\phi}_{\infty}|_{C^k(M)}>\epsilon\; \forall \; l
\end{equation}

\noindent for some $k$.

Using the uniform estimates on the $C^{\infty}$-norm of $\bar{\phi}$, we can extract a subsequence $\{t_{l_m}\}$ along which $\bar{\phi}$ converges in $C^{\infty}$ to some smooth function $\hat{\phi}_{\infty}$. But then by pointwise convergence we have that $\hat{\phi}_{\infty}\equiv\bar{\phi}_{\infty}$, and hence \eqref{C6} is not possible.

Finally we prove the convergence in Theorem \ref{theorem-I1}. Take limit $t \to \infty$ in \eqref{L0.1}. By \eqref{C5} and the previous paragraph, it follows that

\begin{equation}
\label{C7}
\begin{aligned}
    f(\Lambda(\sqrt{-1}\partial\bpartial \bar{\phi}_{\infty} +X[\bar{\phi}_{\infty}]))=\psi[\bar{\phi}_{\infty}]+a
\end{aligned}
\end{equation}

\noindent where, 
$$a=\lim\limits_{t\to \infty}\int_M\frac{\partial{\phi}}{\partial t}\omega^n$$

\clearpage

\bibliographystyle{plain}

\end{document}